\newtheorem{theorem}{Theorem}
\newtheorem*{theorem*}{Theorem}
\newtheorem{observation}{Observation}
\newtheorem{proposition}{Proposition}
\newtheorem{conjecture}{Conjecture}
\newtheorem{corollary}{Corollary}
\newtheorem{lemma}{Lemma}
\theoremstyle{remark}
\theoremstyle{definition}
\newtheorem{definition}{Definition}
\newcommand{\E}{\mathcal{E}}
\renewcommand{\H}{\mathcal{H}}
\newcommand{\C}{\mathcal{C}}
\renewcommand{\O}{\mathcal{O}}
\renewcommand{\P}{\mathcal{P}}
\title{New Invariants of Poncelet-Jacobi\\Bicentric Polygons}
\author[P. Roitman]{Pedro Roitman}
\author[R. Garcia]{Ronaldo Garcia}
\author[D. Reznik]{Dan Reznik} 
\date{March, 2021}
\begin{document}

\maketitle

\begin{abstract}
The 1d family of Poncelet polygons interscribed between two circles is known as the Bicentric family. Using elliptic functions and Liouville's theorem, we show (i) that this family has invariant sum of internal angle cosines and (ii) that the pedal polygons with respect to the family's limiting points have invariant perimeter. Interestingly, both (i) and (ii) are also properties of elliptic billiard N-periodics. Furthermore, since the pedal polygons in (ii) are identical to inversions of elliptic billiard N-periodics with respect to a focus-centered circle, an important corollary is that (iii) elliptic billiard focus-inversive N-gons have constant perimeter. Interestingly, these also conserve their sum of cosines (except for the N=4 case).

\bigskip
\noindent\textbf{Keywords:} Poncelet, Jacobi, elliptic functions, porism, elliptic billiards, bicentric, confocal, polar, inversion, invariant.
\vskip .3cm
\noindent \textbf{MSC} {51M04
\and 51N20 \and 51N35\and 68T20}
\end{abstract}

\section{Introduction}
\label{sec:intro}
The bicentric family is a 1d family of Poncelet N-gons interscribed between two specially-chosen circles \cite[Poncelet's Porism]{mw}. The special case of a family of triangles with fixed incircle and circumcircle was originally studied by Chapple 80 years before Poncelet \cite{odehnal2011-poristic}. Any pair of conics with at least two complex conjugate points of intersection can be sent to a pair of circles via a suitable projective transformation \cite{akopyan2007-conics}. Based on this, in the 1820s Jacobi produced an alternative proof to Poncelet's Great theorem based on simplifications afforded by his elliptic functions over the bicentric family \cite{bos-1987,dragovic11,nash2018-poncelet}.

Referring to
\cref{fig:confocal}, a known fact is that the {\em polar image}\footnote{The polar of a point $P$ with respect to a circle $\C$ centered on $O$ is the line $L$ containing the inversion of $P$ wrt $\C$ and perpendicular to $OP$.} of two non-intersecting circles with respect to either one of their {\em limiting points} is a pair of confocal conics with a focus coinciding with the limiting point chosen \cite{akopyan2007-conics} (see \cref{app:polar-pedal}).

Recall that a pair of non-intersecting circles $\C_1$ and $\C_2$ is associated with a pair of {\em limiting points} $\ell_1,\ell_2$ which, if taken as centers of inversion\footnote{Note that $\ell_1,\ell_2$ coincide with the two points of intersection of all circles orthogonal to $\C_1$ and $\C_2$. This implies that the abovementioned inversions will result in concentric circles.}, send the original circles to two distinct pairs of concentric circles \cite[Limiting Points]{mw}.

Conversely, the bicentric family is the polar image of elliptic (or hyperbolic) billiard N-periodics with respect to a circle centered on a focus (see \cref{sec:five-polys}). Recall the latter conserve both perimeter\footnote{Billiard inscribed in hyperbolas conserve {\em signed} perimeter, see \cref{sec:five-polys}.} and Joachimsthal's constant \cite{sergei91}.

\begin{figure}
    \centering
    \includegraphics[width=.9\textwidth]{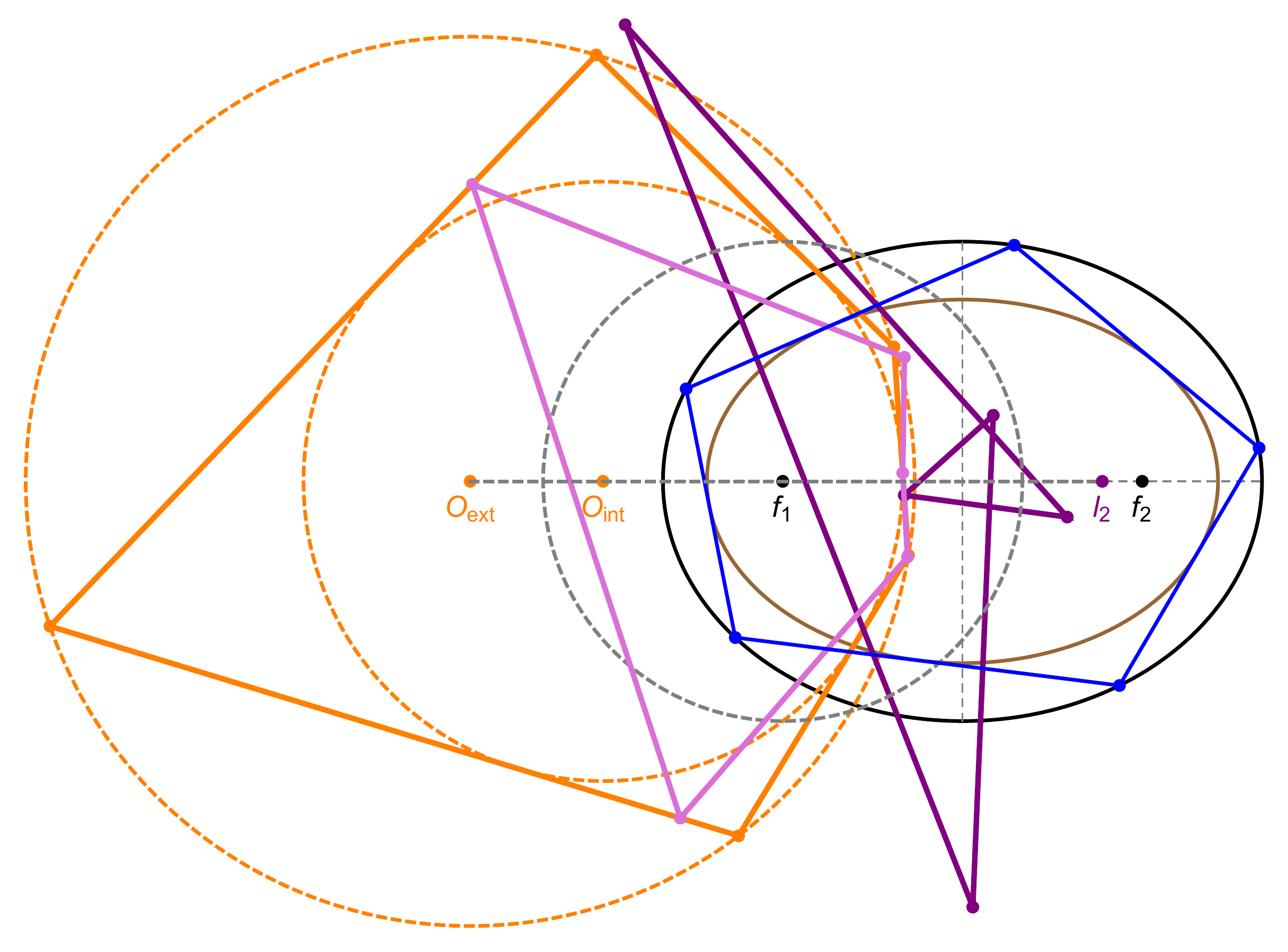}
    \caption{The bicentric family  (solid orange) is the polar image of elliptic billiard N-periodics (blue) with respect to a circle (dashed gray) centered on $f_1$ (which coincides with limiting point $\ell_1$). Also shown are the constant-perimeter bicentric pedals (pink and purple) with respect to either limiting point, $f_1=\ell_1$ and $\ell_2$.  \href{https://youtu.be/8m21fCz8eX4}{Video}}
    \label{fig:confocal}
\end{figure}

\textbf{Main Results:}
Though the bicentric family was much studied in the last 200 years, interactive experimentation with their dynamic geometry has led us to detect and prove a few new curious facts, perhaps known to the giants of the XIX century but never jotted down.

\begin{itemize}
    \item \cref{thm:bicentric-sum}: The sum of the cosines of bicentric polygons is invariant over the family. This mirrors an invariant recently proved for elliptic billiard N-periodics \cite{reznik2020-intelligencer,garcia2020-new-properties,akopyan2020-invariants,bialy2020-invariants}.
    \item  \cref{thm:bicentric-pedal-perimeter} The perimeter of pedal polygons of the bicentrics with respect to its limiting points is invariant; see \cref{fig:confocal}. Notice this too mirrors perimeter invariance of elliptic billiard N-periodics.
    \item \cref{cor:inv-per}: bicentric pedals with respect to a limiting point are identical to the inversion of billiard N-periodics with respect to a focus, therefore the latter also conserves perimeter. In fact it was this surprising observation (see this \href{https://youtu.be/wkstGKq5jOo}{Video}) that prompted the current article.
    \item \cref{conj:limiting-sum-cosines}: Experiments show that the two limiting pedal polygons also conserve their sum of cosines, except for  the case of the $N=4$ pedal with respect to $\ell_1$.
\end{itemize}

\subsection*{Article Structure}
In \cref{sec:jacobi}, we review Jacobi's parametrization for bicentric polygons. We then use it to obtain expressions in terms of Jacobi elliptic functions for each of the above invariants, see \cref{sec:bicentric-sum-of-cosines,sec:pedal-perimeter}. \cref{sec:five-polys} paints a unified view of the five polygon families mentioned herein. A list of illustrative videos appear in \cref{sec:videos}. 

Details of polar and pedal transformations are covered in \cref{app:polar-pedal}. The parameters for a pair of confocal ellipses (or hyperbolas) which are the polar image of the bicentric pair are given in \cref{app:bicentric-to-confocal}. Conversely, the parameters for a bicentric pair which is the polar image of confocal ellipses are given in
\cref{app:confocal-to-bicentric}. In \cref{app:bicentric-vertices-n34} we provide elementary parametrizations for the vertices of $N=3$ and $N=4$ bicentric polygons. In \cref{app:pedal-perimeters-n34} we provide explicit expressions of their perimeters and sums of cosines as well as curious properties thereof.

\subsection*{Related Work}

A few of our experimental conjectures for elliptic billiard N-periodic invariants \cite{reznik2020-intelligencer,garcia2020-new-properties} have been proved: (i) invariant sum of cosines and (ii) invariant product of outer polygon cosines \cite{akopyan2020-invariants,bialy2020-invariants}, and (iii) invariant outer-to-orbit area ratio (for odd N) \cite{caliz2020-area-product}. Dozens of other conjectured invariants appear in \cite{reznik2021-fifty}.

\section{Review: Jacobi's parametrization for bicentric polygons}
\label{sec:jacobi}
In 1828, Jacobi found a beautiful proof for a special case of Poncelet's closure theorem using elliptic functions. In particular, he provided a very simple parametrization for the family of N-sided bicentric polygons that appear in Poncelet's theorem. We will use his parametrization below, and it is appropriate to recall it here.

Referring to Figure~\ref{fig:jacobi-nested}, 
consider two circles $\C_{R}$ and $\C_{r}$, with radii $R$ and $r$, respectively. Let $d$ denote the distance between their centers. We will consider polygons that are inscribed in $\C_{R}$ and also are either inscribed or exscribed in $\C_{r}$. By exscribed in $\C_{r}$ we mean that extensions of the sides of the polygon are tangent to $\C_{r}$. Let $p_{j}(u)$, $j=1,...,N$ be the vertices of a N-sided bicentric family of polygons, parametrized by the real variable $u$, with all the vertices in $\C_{R}$.

\begin{figure}
    \centering
    \includegraphics[width=.6\textwidth]{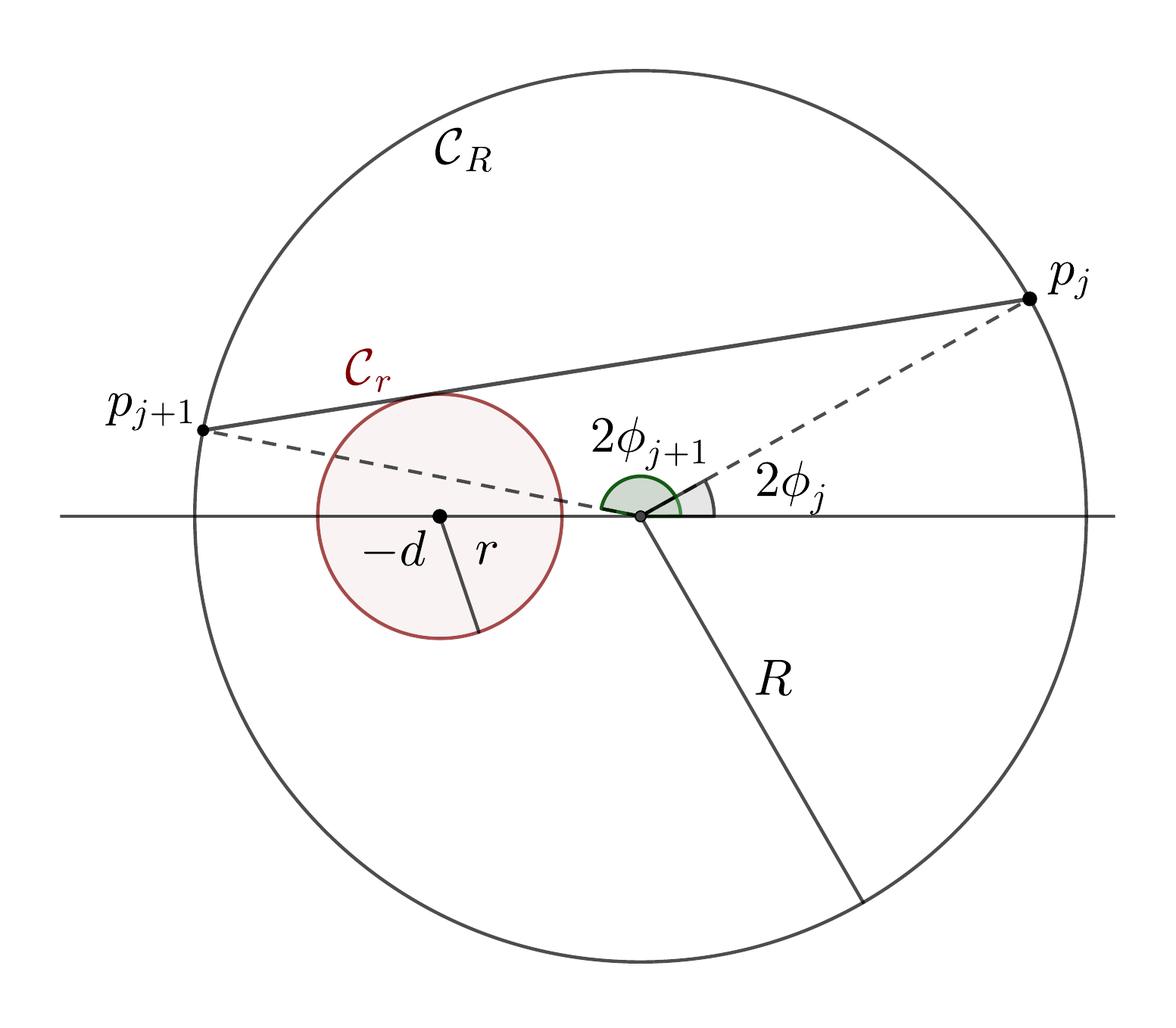}
    \caption{A pair of circles, along with a chord $p_j,p_{j+1}$ of the outer circle tangent to the inner one.}
    \label{fig:jacobi-nested}
\end{figure}

Jacobi noticed that his elliptic functions could be used to provide an explicit expression for the $p_{j}(u)$. Namely, if we write
\begin{equation}
\label{jacobivertex}  
p_{j}(u)=R\left[ \cos{(2\phi_{j}(u))}, \sin{(2\phi_{j}(u))}\right]
\end{equation}

Indeed, he proved that \cite{bos-1987}:

\begin{equation}
\label{jacobiangle}
\phi_{j}(u)=am(u+ j \sigma,k),
\end{equation}
where $am(u,k)$ is the classical Jacobi amplitude function \cite{armitage-2006}, $k$ is the modulus and it is related to $R$, $r$ and $d$ by the following expression \cite[pp. 315]{bos-1987}:

\begin{equation}
\label{jacobirelation}
k^2=\frac{4Rd}{(R+d)^2-r^2},\;\;\;0<k<1
\end{equation}
The real number $K$ is defined by:
\[K=\int_{0}^{\frac{\pi}{2}} \frac{dt}{\sqrt{1-k^2sin^2{t}}}, \]
and finally, $\sigma$ is given by

\[ \sigma=\frac{4\tau K}{N},\]
where $\tau$ is a positive integer and $N>2$.

Actually, Jacobi treated only the case where one of the circles is inscribed, but his argument also holds for the exscribed case \cite{bos-1987}.

Below we recall some fundamental facts about three of Jacobi's elliptic functions: $sn(z,k)=\sin{(am(z,k))}$, $cn(z,k)=\cos(am(z,k))$ and $dn(z,k)=\sqrt{1-k^2sn^2(z,k)}$, where $z \in \mathbb{C}$, and $0<k<1$ is the elliptic modulus. Since $k$ is fixed, we write $sn(z)$ instead of $sn(z,k)$, etc.

These functions have two independent periods and also have simple poles at the same points. In fact:

\begin{align*}
    sn(u+4K)&=sn(u+2iK')=sn(u)\\
    cn(u+4K)&=cn(u+2K+2iK')=cn(u)\\
    dn(u+2K)&=dn(u+4iK')=dn(u)\\
    K'&=K(k'), \;\;k'=\sqrt{1-k^2}
\end{align*}
The  poles of these three functions, which are simple, occur at the points
\[2mK+i(2n+1)K'
,\;\; m,n\in \mathbb{Z}\]

They also display a certain symmetry around the poles. Namely, if $z_p$ is a pole of $sn(z)$, $cn(z)$ and $dn(z)$, then, for every $w \in \mathbb{C}$, we have \cite[Chapter 2]{armitage-2006}:

\begin{align}
sn(z_p+w)=&-sn(z_p-w) \nonumber \\
cn(z_p+w)=&-cn(z_p-w)  \label{eqn:zpole} \\
dn(z_p+w)=&-dn(z_p-w) \nonumber
\end{align}

\section{Bicentric Family: Invariant Sum of Cosines}
\label{sec:bicentric-sum-of-cosines}
\begin{theorem}
The sum of cosines of angles internal to the family of N-periodics interscribed in a bicentric pair is invariant.
\label{thm:bicentric-sum}
\end{theorem}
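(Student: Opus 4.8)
The plan is to express the cosine of the internal angle at vertex $p_j(u)$ directly in terms of Jacobi elliptic functions evaluated at $u+j\sigma$, sum over $j$, and then show the resulting function of $u$ is a bounded elliptic function, hence constant by Liouville's theorem.

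First I would set up the geometry. Using \eqref{jacobivertex} and \eqref{jacobiangle}, the vertex $p_j$ sits at angle $2\phi_j = 2\,am(u+j\sigma)$ on $\C_R$. The internal angle at $p_j$ is determined by the two chords $p_{j-1}p_j$ and $p_jp_{j+1}$; for a chord of a circle subtending vertices at circle-angles $2\phi_a$ and $2\phi_b$, the direction of the chord is $\phi_a+\phi_b+\pi/2$. Hence the internal angle at $p_j$ equals (up to sign and an additive constant) $\phi_{j+1}-\phi_{j-1}$, so its cosine is $\cos(\phi_{j+1}-\phi_{j-1}) = \cos(am(u+(j+1)\sigma)-am(u+(j-1)\sigma))$. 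Expanding with the cosine addition formula gives $\cos\phi_{j+1}\cos\phi_{j-1}+\sin\phi_{j+1}\sin\phi_{j-1} = cn(u+(j+1)\sigma)\,cn(u+(j-1)\sigma) + sn(u+(j+1)\sigma)\,sn(u+(j-1)\sigma)$. I would double-check the precise relation between the internal angle and $\phi_{j+1}-\phi_{j-1}$ using the tangency condition to $\C_r$ (this fixes the constant $\sigma$ in Jacobi's parametrization and guarantees closure), but the key point is that each cosine becomes a fixed rational/bilinear expression in $sn,cn$ (and possibly $dn$) evaluated at the shifted arguments.

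Next, define $S(u) = \sum_{j=1}^N \cos(\text{internal angle at } p_j)$, now a function of the single complex variable $u$ (extending $u$ from $\R$ to $\Cp$). Since $\sigma = 4\tau K/N$, a shift $u \mapsto u+\sigma$ cyclically permutes the $N$ summands (using $4K$-periodicity of $sn,cn$ to wrap around), so $S$ is $\sigma$-periodic; combined with the $4K$- and $2iK'$-type periods of the elliptic functions, $S$ is doubly periodic. The crucial step is then to show $S$ has no poles: each summand is built from $sn,cn,dn$ whose only singularities are the simple poles at $2mK+i(2n+1)K'$. I would argue that the potential poles cancel — this is where the pole-symmetry relations \eqref{eqn:zpole} do the work. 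At a point $z_p$ where some $sn(u+j\sigma)$ blows up, the relations $sn(z_p+w)=-sn(z_p-w)$, etc., pair up terms symmetric about $z_p$ so the divergent parts cancel in the sum (alternatively, one shows each individual summand is actually regular because the cosine of an angle is naturally bounded, forcing the $\phi_{j+1}-\phi_{j-1}$ combination to avoid poles). A bounded entire-on-the-torus function is constant, so $S(u)\equiv S(0)$.

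The main obstacle I anticipate is the pole analysis: showing rigorously that $S(u)$ extends to a holomorphic function on $\Cp$ (equivalently, on the torus $\Cp/\Lambda$). The individual terms $cn(u+(j+1)\sigma)cn(u+(j-1)\sigma)+sn(u+(j+1)\sigma)sn(u+(j-1)\sigma)$ do have poles where the arguments hit $2mK+i(2n+1)K'$, so one must verify these residues cancel either termwise (via \eqref{eqn:zpole}, exploiting that $(j+1)\sigma$ and $(j-1)\sigma$ straddle $j\sigma$ symmetrically) or after summation. A secondary, more bookkeeping-level obstacle is pinning down the exact trigonometric identity for the internal angle — in particular getting the additive constants and the treatment of the exscribed ("star") case right — but that is a finite computation. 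Once regularity and double-periodicity are established, Liouville's theorem closes the argument immediately, and evaluating $S$ at a convenient value of $u$ (e.g. exploiting a symmetry axis) identifies the constant if desired.
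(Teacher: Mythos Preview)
Your approach is essentially the paper's: express each $\cos\theta_j$ as $-(cn(u_{j+1})cn(u_{j-1})+sn(u_{j+1})sn(u_{j-1}))$ (the minus sign appears because the internal angle is the supplement of $\phi_{j+1}-\phi_{j-1}$), complexify $S(u)$, show it has no poles, and invoke Liouville.

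The one place your plan is imprecise is the pole cancellation, which you correctly flag as the main obstacle. Your termwise alternative (``each individual summand is regular because the cosine of an angle is bounded'') does not work: boundedness on $\R$ says nothing after complexification, and each summand really does acquire simple poles. The cancellation happens genuinely across summands, and not via the symmetry you name. If $u_j=u+j\sigma$ is a common pole of $sn,cn$, the summands in $S$ that involve $u_j$ are the $(j-1)$st and the $(j+1)$st; collecting the coefficient of $cn(u_j)$ gives $-(cn(u_{j-2})+cn(u_{j+2}))$, and this vanishes by \eqref{eqn:zpole} with $w=2\sigma$ since $u_{j\pm 2}=u_j\pm 2\sigma$ straddle the pole (likewise for $sn(u_j)$). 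So the relevant symmetry is $u_{j-2},u_{j+2}$ about the pole $u_j$, not $u_{j-1},u_{j+1}$ about $u_j$ as your parenthetical suggests. With that correction your argument is exactly the paper's.
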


\begin{proof}
Let $\{p_{j}(u)\}$, as in  \eqref{jacobivertex}, denote the vertices of the family of bicentric polygons. Let $\theta_{j}(u)$ denote the internal angle at the vertex $p_{j}(u)$. It follows from elementary geometry that
$\cos{\theta_{j}(u)}=-\cos({\phi_{j+1}(u)-\phi_{j-1}(u)}).$
Thus, if we denote by $S(u)$ the sum of the cosines of the internal angles, we have:

\begin{equation}
\label{sumofcosines}
S(u)=\sum \cos{\theta_{j}(u)}=-\sum{\left(cn(u_{j+1})cn(u_{j-1})+sn(u_{j+1})sn(u_{j-1})\right)},
\end{equation}
where $u_{j}=u+j\sigma$.

We now consider the natural complexified version of $S(u)$ defined on the complex plane by assuming that $u$ is a complex variable. To prove that $S$ is constant, it is sufficient to show that it has no poles and then apply Liouville's theorem.

So, suppose that $u=u_p$ is a pole of $S$. This implies that, for a certain index $j$, $u_{j}=u_{p}+j\sigma$ is a common pole of $cn(z)$ and $sn(z)$. We will now see that this leads to a contradiction.

In fact, by looking at (\ref{sumofcosines}), the terms where $u_{j}$ appears are given by
$$-(cn(u_{j})cn(u_{j-2})+sn(u_{j})sn(u_{j-2})+cn(u_{j+2})cn(u_{j})+sn(u_{j+2})sn(u_{j})).$$

Thus, the coefficients of $cn(u_{j})$ and $sn(u_{j})$ are, respectively:

$$-(cn(u_{j-2})+cn(u_{j+2})),$$

$$-(sn(u_{j-2})+sn(u_{j+2})).$$

Note that by (\ref{eqn:zpole}) both coefficients are zero, and they cancel out the simple poles of $cn(z)$ and $sn(z)$ at $u_{j}$, so $u_{p}$ is not a pole of $S$.   
\end{proof}

\section{Bicentric Limiting Pedals: Invariant Perimeter}
\label{sec:pedal-perimeter}
In this section we prove that the two pedal polygons of a bicentric Poncelet family with respect to circles centered on either of its two {\em limiting points} (see below) conserve perimeter.

\begin{definition}[Pedal Polygon]
Given a planar polygon $\P$ and a point $p$, the {\em pedal polygon} $\P_{\perp}$ of $\P$ wrt $p$ has vertices $q_{j}$ at the orthogonal projections of $p$ onto the jth sideline $p_{j}p_{j+1}$ or extension thereof.
\end{definition}

\begin{definition}[Limiting Point]
Any pair of non-intersecting circles is associated with a pair of ``limiting'' points $\ell_1,\ell_2$ which lie on the line connecting the centers, with respect to which the circles are inverted to a concentric pair.
\end{definition}

Let $\C_{R}$ be a circle of radius $R$ centered at the origin $(0,0)$ and $\C_{r}$ be a circle of radius $r$ centered at $(-d,0)$. Then the limiting points $(\delta_{\pm},0)$ of the pencil of circles defined by $\C_{R}$ and $\C_{r}$ has abscissa given by \cite[Limiting Point, Eqn. 5]{mw}:

\begin{equation}
\delta_{\pm}=\frac{r^2-R^2-d^2 \pm 
\sqrt{d^4 -2(R^2 +r^2)d^2 + (R^2 - r^2)^2 }}{2d}
\label{eqn:limiting-point}
\end{equation}

Let $\P(u)$ be the family of bicentric polygons with respect to a pair of circles $C_{R}$ and $C_{r}$, where $u$ is the real parameter introduced by Jacobi, with vertices given by (\ref{jacobivertex}). Let $\ell$ denote a limiting point of the pencil defined by these circles, as in \eqref{eqn:limiting-point}. Below we derive an expression for the length of the sides of pedal polygons  $\P_{\perp}(u)$ defined by $\P(u)$ and $\ell$.

\begin{lemma}
\label{lem:pedal-perimeter}
 Let $p_{j-1}(u)$, $p_{j}(u)$ and $p_{j+1}(u)$ be three consecutive vertices of $\P(u)$, let $s_{j}(u)=|q_{j+1}(u)- q_{j}(u)|$ be jth sidelength of $\P_{\perp}$. Then:

\begin{equation}
\label{pedalside}
s_{j}(u)=\frac{r_{j}(u)\rho_{j}(u)}{2R},
\end{equation}
where $r_{j}(u)=\left|p_{j-1}(u)-p_{j+1}(u)\right|$ and $\rho_{j}(u)=\left|\ell-p_{j}(u)\right|$. In addition, $r_{j}(u)$ and $\rho_{j}(u)$ are given by the following expressions.
  
\begin{align}
r_{j}(u)=&2R\sin{(\phi_{j+1}(u)-\phi_{j-1}(u))} \label{sideterm} \\
=&2R\left(sn(u_{j+1})cn(u_{j-1})-sn(u_{j-1})cn(u_{j+1})\right) \nonumber
\end{align} 
where $u_{j}=u+j \sigma$.

\begin{equation}
\label{spoketerm}
\rho_{j}(u)=\frac{2}{k}\sqrt{-\delta_{\pm}R}\,\,dn(u_{j}).
\end{equation}
\end{lemma}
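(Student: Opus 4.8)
The plan is to establish the three claimed formulas in turn, since \eqref{pedalside} is really a statement about pedal polygons of an inscribed polygon in a circle, while \eqref{sideterm} and \eqref{spoketerm} are computations in Jacobi's parametrization. First I would prove \eqref{pedalside}. Let $q_{j-1},q_j$ be the feet of the perpendiculars from $\ell$ onto the sidelines $p_{j-1}p_j$ and $p_jp_{j+1}$. The segment $\ell p_j$ is seen at a right angle from both $q_{j-1}$ and $q_j$, so $q_{j-1},q_j$ lie on the circle with diameter $\ell p_j$, whose radius is $\rho_j(u)/2$. In that circle the chord $q_{j-1}q_j = s_j(u)$ subtends at the circumference the angle $\angle q_{j-1} p_j q_j$, which is exactly the internal angle of $\P(u)$ at $p_j$, i.e. $\theta_j(u)$. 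Hence $s_j(u) = \rho_j(u)\sin\theta_j(u)$. Now $\theta_j(u)$ is an inscribed angle in $\C_R$ (the vertex $p_j$ lies on $\C_R$ and looks at the chord $p_{j-1}p_{j+1}$), so $\sin\theta_j(u) = r_j(u)/(2R)$ by the extended law of sines. Combining the two relations gives $s_j(u) = r_j(u)\rho_j(u)/(2R)$, which is \eqref{pedalside}. (One should note that $\theta_j$ and the inscribed angle over $p_{j-1}p_{j+1}$ may be the same or supplementary depending on configuration, but the sines agree, which is all that is needed; the same remark covers the exscribed case.)

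Next I would prove \eqref{sideterm}. Since $p_{j\pm1}(u) = R[\cos 2\phi_{j\pm1}, \sin 2\phi_{j\pm1}]$ both lie on $\C_R$, the chord length is $r_j(u) = 2R|\sin(\phi_{j+1}-\phi_{j-1})|$ by the standard chord formula, and dropping the absolute value is the usual sign convention already in force for $r_j$. Then expanding $\sin(\phi_{j+1}-\phi_{j-1})$ and using $\phi_m = am(u_m,k)$, so that $\sin\phi_m = sn(u_m)$ and $\cos\phi_m = cn(u_m)$, yields the second line of \eqref{sideterm} directly from the addition formula for sine. This step is essentially bookkeeping.

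The substantive computation is \eqref{spoketerm}: the distance $\rho_j(u) = |\ell - p_j(u)|$ from the limiting point $\ell = (\delta_\pm,0)$ to $p_j(u) = R[\cos 2\phi_j,\sin 2\phi_j]$. Writing $\rho_j(u)^2 = \delta_\pm^2 - 2\delta_\pm R\cos 2\phi_j + R^2$ and using $\cos 2\phi_j = 1 - 2sn^2(u_j) = 2cn^2(u_j) - 1$, this becomes a quadratic in $sn^2(u_j)$; the goal is to show it equals $(4/k^2)(-\delta_\pm R)\,dn^2(u_j) = (4/k^2)(-\delta_\pm R)(1 - k^2 sn^2(u_j))$. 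Matching the two expressions imposes two scalar conditions on $(\delta_\pm, R, d, r)$ — one from the constant term and one from the $sn^2$-coefficient — and the claim is that both are satisfied identically once $\delta_\pm$ is the limiting-point abscissa \eqref{eqn:limiting-point} and $k^2$ is given by Jacobi's relation \eqref{jacobirelation}. The main obstacle is precisely this verification: it requires knowing the relation between $\delta_\pm$ and $(R,r,d)$ — in particular the product $\delta_+\delta_- = R^2$ or equivalently that $\ell$ is the center of inversion carrying $\C_R$ to a concentric pair, together with the identity $\delta_\pm^2 + R^2 \mp (\text{the radical in }\eqref{eqn:limiting-point})\cdot(\text{something})$, and then plugging in $k^2$ from \eqref{jacobirelation}. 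I would set $\cos 2\phi_j = 1 - 2sn^2$ in $\rho_j^2$, read off that the constant term is $(\delta_\pm + R)^2$ — wait, that is $(\delta_\pm - R)^2 + \ldots$; more carefully, $\rho_j^2 = (\delta_\pm + R)^2 - 4\delta_\pm R\, sn^2(u_j)$ after using $\cos 2\phi_j = 1 - 2sn^2$, and hence demand $(\delta_\pm+R)^2 = -4\delta_\pm R/k^2$ and $4\delta_\pm R = -4\delta_\pm R\cdot(-k^2)/k^2$; the second is automatic and the first is exactly $k^2 = -4\delta_\pm R/(\delta_\pm+R)^2$, which one then checks coincides with \eqref{jacobirelation} after substituting $\delta_\pm$ from \eqref{eqn:limiting-point} (note $\delta_\pm < 0$, so $-\delta_\pm R > 0$ and the square root in \eqref{spoketerm} is real). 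That algebraic identity between the two forms of $k^2$ is the one genuinely computational point, and I would relegate it to the appendix on bicentric-to-confocal parameters (\cref{app:bicentric-to-confocal}) if it threatens to clutter the proof.
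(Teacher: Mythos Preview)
Your approach matches the paper's: \eqref{pedalside} via the classical pedal-triangle argument (the circle on diameter $\ell p_j$ together with the extended law of sines in $\C_R$, which is exactly the argument in the cited Johnson reference), \eqref{sideterm} by the chord formula and sine addition, and \eqref{spoketerm} by direct computation reducing to an algebraic identity linking $k^2$ and $\delta_\pm$. One slip to fix: substituting $\cos 2\phi_j = 1 - 2\,sn^2(u_j)$ gives $\rho_j^2 = (\delta_\pm - R)^2 + 4\delta_\pm R\,sn^2(u_j)$, not $(\delta_\pm + R)^2 - 4\delta_\pm R\,sn^2(u_j)$, so the condition to verify is $k^2 = -4\delta_\pm R/(\delta_\pm - R)^2$; it is \emph{this} identity (and not your version with $(\delta_\pm+R)^2$, which yields $4Rd/[(R-d)^2-r^2]\neq k^2$) that reduces to \eqref{jacobirelation} once you use the quadratic $d\,\delta_\pm^2 + (R^2+d^2-r^2)\delta_\pm + R^2 d = 0$ satisfied by $\delta_\pm$, i.e., $\delta_\pm^2+R^2=-\delta_\pm(R^2+d^2-r^2)/d$ --- which is precisely the substitution the paper performs.
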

\begin{proof}
The proof of \eqref{pedalside} follows the standard one for sidelengths of the pedal triangle \cite[pp. 135--141]{johnson1960}. Equation \eqref{sideterm} follows by inspection from \cref{fig:jacobi-nested}, and the definition of Jacobi's $sn(u)$ and $cn(u)$. Finally, \eqref{spoketerm} is a long but simple computation. Below we show a few intermediate steps. First, if we let $\ell=(\delta_{\pm},0)$ be either limiting point. Then:

\[\rho_{j}(u)=\cos(\phi_{j}(u))\sqrt{R^2-2R\delta_{\pm}}+\delta_{\pm}^2\]

It is straightforward to check that $\delta_{\pm}<0$. Substitute the expression \eqref{eqn:limiting-point} for $\delta_{\pm}$ in the expression for $\rho_{j}(u)$ to obtain:

\[\rho_{j}(u)=\sqrt{\frac{-\delta_{\pm}}{d}\Big( R^2+d^2-r^2-2Rd\cos{(2\phi_{j}(u))}\Big)}\]

Finally, using \eqref{jacobirelation}, we get:

\[\rho_{j}(u)=\frac{2}{k}\sqrt{-\delta_{\pm}R}\sqrt{1-k^2sn^2(u_{j})}=\frac{2}{k}\sqrt{-\delta_{\pm}R}\,\,dn(u_{j})\] 
\end{proof}

We are now in a position to prove the following.

\begin{theorem}
The perimeters $L_\pm$ of the pedal polygons of the bicentric Poncelet family with respect to either limiting point are invariant.
\label{thm:bicentric-pedal-perimeter}
\end{theorem}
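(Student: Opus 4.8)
The plan is to combine \cref{lem:pedal-perimeter} with the same Liouville-theorem strategy used in \cref{thm:bicentric-sum}. From \eqref{pedalside}, \eqref{sideterm}, and \eqref{spoketerm} we have, up to a constant depending only on $R$, $k$, and $\delta_\pm$,
\[
s_j(u) = C_\pm\,\bigl(sn(u_{j+1})cn(u_{j-1})-sn(u_{j-1})cn(u_{j+1})\bigr)\,dn(u_j),
\qquad C_\pm = \frac{2}{k}\sqrt{-\delta_\pm R},
\]
so that $L_\pm(u)=\sum_j s_j(u)$ is, after expansion, a finite sum of terms each of the form $sn(u_{j\pm1})cn(u_{j\mp1})dn(u_j)$. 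I would first argue that this complexified $L_\pm$ is an elliptic function of $u$ (it is built from $sn,cn,dn$ evaluated at shifted arguments, hence doubly periodic with periods dividing those of $sn,cn,dn$, e.g. $4K$ and $4iK'$ or appropriate sublattice since $\sigma=4\tau K/N$ makes the shifts commensurable with $K$). Then, as before, it suffices to show $L_\pm$ has no poles, and Liouville forces it to be constant.

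The key step is the pole cancellation. The poles of $sn,cn,dn$ are all simple and located at the common pole set $z_p = 2mK+i(2n+1)K'$. Suppose $u=u_p$ is a pole of $L_\pm$; then $u_{j_0}=u_p+j_0\sigma$ is such a common pole for some index $j_0$. I must collect every term in the expanded sum in which the argument $u_{j_0}$ appears. It appears in exactly three summands of $\sum_j s_j$: in $s_{j_0-1}$ (through the $dn$-free factor, as $u_{(j_0-1)+1}=u_{j_0}$), in $s_{j_0+1}$ (as $u_{(j_0+1)-1}=u_{j_0}$), and in $s_{j_0}$ itself (through the $dn(u_{j_0})$ factor). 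The terms from $s_{j_0\pm1}$ carry $sn(u_{j_0})$ or $cn(u_{j_0})$ linearly, with coefficients that are sums of $sn/cn$ of the ``outer'' arguments $u_{j_0\pm2}$; by the reflection identity \eqref{eqn:zpole} around the pole $u_{j_0}$, those coefficients vanish, exactly as in \cref{thm:bicentric-sum}. The genuinely new issue is the term from $s_{j_0}$, which contains $dn(u_{j_0})$: its residue is controlled by the factor $sn(u_{j_0+1})cn(u_{j_0-1})-sn(u_{j_0-1})cn(u_{j_0+1})=\sin(\phi_{j_0+1}-\phi_{j_0-1})$. I expect this to vanish at $u=u_p$ for the following reason: $u_{j_0}$ being a pole of $sn$ corresponds, via \eqref{jacobivertex}–\eqref{jacobiangle}, to a degenerate vertex $p_{j_0}$ where the two neighbours $p_{j_0-1},p_{j_0+1}$ coincide, forcing $r_{j_0}=0$; more concretely, one checks from the addition formulas that $sn(u_{j_0}\pm\sigma)$ behaves so that the antisymmetric combination $sn(u_{j_0}+\sigma)cn(u_{j_0}-\sigma)-sn(u_{j_0}-\sigma)cn(u_{j_0}-\sigma)\cdot(\text{sign})$ has a zero which kills the simple pole of $dn(u_{j_0})$. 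So the single potentially dangerous term is also regular, and $L_\pm$ is pole-free.

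Concretely, to nail the last point I would evaluate $sn(z_p+w)cn(z_p-w)-sn(z_p-w)cn(z_p+w)$ at $w=\sigma$: by \eqref{eqn:zpole} this equals $-sn(z_p-w)cn(z_p-w)-(-1)\,sn(z_p-w)\cdot(-1)cn(z_p-w)$... more carefully, writing $a=sn(z_p+w)$, $b=cn(z_p+w)$, $a'=sn(z_p-w)$, $b'=cn(z_p-w)$, identity \eqref{eqn:zpole} gives $a=-a'$ and $b=-b'$, so $ab'-a'b = (-a')b' - a'(-b') = 0$. Hence $r_{j_0}$ vanishes identically at a pole of $dn(u_{j_0})$, and the product $r_{j_0}(u)\,dn(u_{j_0})$ is finite there — in fact one should check the zero is simple to exactly cancel the simple pole, which follows since $w\mapsto sn(z_p+w)cn(z_p-w)-sn(z_p-w)cn(z_p+w)$ has a first-order zero at generic $w$ unless $\sigma$ is a half-period, a case excluded by $N>2$ and the admissible range of $\tau$.

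The main obstacle, then, is this $dn$-carrying term: in \cref{thm:bicentric-sum} only $sn$ and $cn$ appeared and the cancellation was a clean application of \eqref{eqn:zpole} to linear coefficients, whereas here the perimeter's summand is genuinely trilinear in the elliptic functions and the pole at $u_{j_0}$ sits inside a $dn$ factor rather than being multiplied by a coefficient. The resolution is that the companion factor $r_{j_0}(u)$ is itself forced to vanish (to first order) precisely at those poles, which I would verify with the short computation above. Once both mechanisms — coefficient vanishing for the $s_{j_0\pm1}$ contributions and zero–pole cancellation for the $s_{j_0}$ contribution — are in place, $L_\pm$ extends to an entire doubly periodic function, hence is constant by Liouville, proving \cref{thm:bicentric-pedal-perimeter}.
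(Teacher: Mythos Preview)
Your argument is correct and essentially identical to the paper's: express $L_\pm$ via \cref{lem:pedal-perimeter}, complexify, isolate the three summands containing the potential pole $u_{j_0}$, use the reflection identities \eqref{eqn:zpole} to show each is regular, and conclude by Liouville. One small correction and one simplification: the coefficients of $sn(u_{j_0})$ and $cn(u_{j_0})$ coming from $s_{j_0\pm1}$ are not sums of $sn/cn$ at the outer arguments $u_{j_0\pm2}$ alone but products that also carry the factors $dn(u_{j_0\pm1})$ (the cancellation still goes through because \eqref{eqn:zpole} applies to $dn$ too, so both factors flip sign), and your worry about the \emph{order} of the zero in the $dn(u_{j_0})$-term is unnecessary---any zero of a holomorphic factor already kills a simple pole.
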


\begin{proof}
From Lemma \ref{lem:pedal-perimeter}, the perimeter is given by:

\[ L_\pm(u)=\frac{\sqrt{-\delta_{\pm}R}}{k}\sum{dn(u_{j})\Big(sn(u_{j+1})cn(u_{j-1})-sn(u_{j-1})cn(u_{j+1})\Big)} \]

To prove the above is constant, we consider its natural complexified version, that is, we think of $L_\pm$ as function of a complex variable $u$. Clearly, $L_\pm$ becomes a meromorphic function defined on the complex plane. To prove that $L_\pm$ is constant we will show that it is entire and bounded. So by Liouville's theorem it must be constant.

In turn this amounts to showing $L_\pm$ has no poles. Now, suppose that, for $u=u_p$, a certain $u_{j}$ is a common simple pole of $sn(z)$, $cn(z)$ and $dn(z)$. This is the only way that $L_\pm$ can have a pole.

From the expression of $L_\pm$, it follows there are three terms in the sum where the pole $u_{j}$ of the three Jacobian elliptic functions appears:

\begin{align*}
dn(u_{j-1})&\left(sn(u_{j})cn(u_{j-2})-sn(u_{j-2})cn(u_{j})\right)\\
dn(u_{j})&\left(sn(u_{j+1})cn(u_{j-1})-sn(u_{j-1})cn(u_{j+1})\right)\\
dn(u_{j+1})&\left(sn(u_{j+2})cn(u_{j})-sn(u_{j})cn(u_{j+2})\right)
\end{align*}

We have to prove that the sum of these terms is finite at $u_{j}$. To see this, consider first the term that multiplies $dn(u_{j})$, namely

\[sn(u_{j+1})cn(u_{j-1})-sn(u_{j-1})cn(u_{j+1}).\]

Since $u_j=u+j \sigma$ is a pole and   $u_{j+1}=u_j+ \sigma$, $u_{j-1}=u_j- \sigma$ it follows from \eqref{eqn:zpole} that  $sn(u_{j-1})=-sn(u_{j+1})$ and $cn(u_{j-1})=-cn(u_{j+1})$. Therefore, the expression above is zero. And this cancels the simple pole of $dn(u)$ at $u_{j}$. The same argument can be applied to the terms that multiply $sn(u_{j})$ and $cn(u_{j})$ and this shows that $u_p$ is not a pole of $L_\pm$. 

So $L_\pm$ has no poles and by the periodicity of the elliptic functions, it must be bounded. Thus, by Liouville's theorem $L_\pm$ is constant.
\end{proof}


In \cref{app:bicentric-to-confocal,app:confocal-to-bicentric} we show that the image of two nested circles wrt to $\ell_1$ is a confocal pair of ellipses, therefore under this transformation, a bicentric N-gon is sent to an elliptic billiard N-gon. Lemmas \ref{lem:polar} and \ref{lem:limit-focus} found in the Appendix~\ref{app:polar-pedal} show that the bicentric pedal with respect to $\ell_1$ is identical to its polar image (elliptic billiard N-periodic) inverted with respect to a circle centered on $f_1=\ell_1$. Therefore:

\begin{corollary}
Over the family of N-periodics in the elliptic billiard (confocal pair), the perimeter of inversions of said N-periodics with respect to a focus-centered circle is invariant. 
\label{cor:inv-per}
\end{corollary}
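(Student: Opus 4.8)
The plan is to deduce the corollary directly from \cref{thm:bicentric-pedal-perimeter} by transporting it through the polar/pedal dictionary between bicentric circles and confocal conics; no new elliptic-function computation should be needed.

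First I would start from an arbitrary confocal pair (boundary ellipse together with its caustic) that supports an $N$-periodic billiard family. By the construction carried out in \cref{app:confocal-to-bicentric}, there is a pair of non-intersecting circles $\C_R,\C_r$ whose polar image with respect to the limiting point $\ell_1$ is exactly this confocal pair, with $\ell_1$ sent to the common focus $f_1$. Since Poncelet's porism is preserved by the polar correspondence, the bicentric $N$-gon family $\P(u)$ interscribed between $\C_R,\C_r$ is carried, vertex by vertex and sideline by sideline, onto the billiard $N$-periodic family; as $u$ ranges over $\R$ the whole billiard family is swept out.

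Next I would invoke \cref{lem:polar} and \cref{lem:limit-focus}: polarity with respect to a circle centered at $\ell_1$ converts the pedal construction (foot of perpendicular from $\ell_1$ onto each sideline) into the inversion construction (inversion in a circle centered at $\ell_1=f_1$). Concretely, the pedal polygon $\P_\perp(u)$ of the bicentric $N$-gon with respect to $\ell_1$ coincides, up to the fixed similarity factor determined by the radius of the chosen inversion circle, with the inversion in that focus-centered circle of the corresponding billiard $N$-periodic. Combining this with \cref{thm:bicentric-pedal-perimeter}, which gives that the perimeter $L_+$ of $\P_\perp(u)$ is independent of $u$, one concludes that the perimeter of the focus-inversive $N$-gon is independent of which billiard $N$-periodic was chosen, i.e.\ invariant over the confocal family. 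The same argument with $\ell_2$ (respectively $L_-$) is not needed, since $\ell_2$ is not a focus.

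The main obstacle is the middle step: rigorously matching "pedal of $\P$ with respect to $\ell$" with "inversion of the polar image of $\P$ in a circle centered at $\ell$", and in particular tracking the similarity factor so that a genuine \emph{constant} on the bicentric side produces a \emph{constant} (not merely a bounded meromorphic function) on the billiard side. One must also verify that degenerate configurations—a sideline passing through $\ell_1$, or a vertex landing at $\ell_1$—do not disrupt the correspondence, and adopt the signed conventions flagged in \cref{sec:five-polys} so that the hyperbolic case and the orientation of pedal side-vectors behave consistently. These verifications are exactly what is packaged into \cref{app:polar-pedal,app:bicentric-to-confocal,app:confocal-to-bicentric}, so once those are in hand the corollary is immediate.
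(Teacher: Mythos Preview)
Your proposal is correct and follows essentially the same route as the paper: identify the bicentric pair with the polar image of the confocal pair via \cref{app:bicentric-to-confocal,app:confocal-to-bicentric}, use \cref{lem:polar,lem:limit-focus} to equate the $\ell_1$-pedal of the bicentric $N$-gon with the focus-inversion of the billiard $N$-periodic, and then invoke \cref{thm:bicentric-pedal-perimeter}. The only minor over-caution is the ``similarity factor'': when the polar circle and the inversion circle are the same (as in the paper), the foot of the perpendicular from $\ell_1$ to a sideline is \emph{exactly} the inversion of that sideline's pole, so the pedal polygon and the focus-inversive polygon are literally identical, not merely similar.
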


Though not yet proved, experimental evidence suggests:

\begin{conjecture}
The sum of cosines of bicentric pedal polygons with respect to either limiting point is invariant, except for the $\ell_1$-pedal in the $N=4$ case.
\label{conj:limiting-sum-cosines}
\end{conjecture}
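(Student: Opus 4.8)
\emph{Proof proposal.} The plan is to follow the template of \cref{thm:bicentric-sum} and \cref{thm:bicentric-pedal-perimeter}. Keep the notation $u_i=u+i\sigma$ and, as in \cref{lem:pedal-perimeter}, write $q_j$ for the pedal vertices and $s_j=|q_{j+1}-q_j|$ for the pedal sidelengths; let $\psi_j(u)$ be the internal angle of $\P_{\perp}(u)$ at $q_j$, so the object of interest is $\Sigma(u)=\sum_j\cos\psi_j(u)$. By the law of cosines applied to the triangle $q_{j-1}q_jq_{j+1}$, $\cos\psi_j=\big(s_{j-1}^2+s_j^2-|q_{j-1}-q_{j+1}|^2\big)/(2\,s_{j-1}s_j)$, and $s_{j-1},s_j$ are already expressed in \cref{lem:pedal-perimeter} through $sn,cn,dn$ at consecutive $u_i$. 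So the first step is to compute the remaining ingredient, the short pedal diagonal $|q_{j-1}-q_{j+1}|$, explicitly in the same functions; this is a long but elementary computation of the kind carried out in the proof of \cref{lem:pedal-perimeter} (place $\ell$ at the origin, use that $\ell,q_{j-1},p_{j-1},q_j$ and $\ell,q_j,p_j,q_{j+1}$ lie on the circles of diameter $\overline{\ell p_{j-1}}$ and $\overline{\ell p_j}$, and substitute \eqref{jacobivertex}, \eqref{jacobirelation}, \eqref{eqn:limiting-point}). It yields $\cos\psi_j(u)=\Phi(u_{j-2},u_{j-1},u_j,u_{j+1})$ for an explicit rational expression $\Phi$ in $sn,cn,dn$ of four consecutive arguments, whose denominator is built from the factors of $s_{j-1}s_j$. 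Complexifying $u$, $\Sigma$ is meromorphic on $\Cp$ and doubly periodic with periods $4K,\,4iK'$; it therefore suffices to show $\Sigma$ has no poles, after which Liouville forces it to be constant.

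Two kinds of potential poles must be ruled out. The first are the common simple poles of $sn,cn,dn$ at some $u_i$. Whenever such a pole is \emph{isolated} --- no other $u_{i'}$ is a pole at the same value of $u$ --- only finitely many terms $\cos\psi_j$ feel it, and in each the numerator factors carrying that pole are, by \eqref{eqn:zpole} applied with $w=\sigma$ and $w=2\sigma$, combinations of $sn(u_i\pm w)$, $cn(u_i\pm w)$, $dn(u_i\pm w)$ that add to zero --- exactly the cancellation used in the two theorems above --- so the pole is removable. The second, genuinely new, family are the zeros of the denominator, i.e.\ the points where some $s_j$, $r_j$ or $\rho_j=\tfrac{2}{k}\sqrt{-\delta_{\pm}R}\,dn(u_j)$ vanishes; there the two (resp.\ three) terms $\cos\psi_j$ sharing that vanishing factor must cancel one another. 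This should follow from the local geometry --- when a single pedal side collapses, the two internal angles at its endpoints become supplementary to leading order, so their cosines have opposite leading parts --- but turning this into a clean statement about a meromorphic function of the complex variable $u$ is the technical heart of the argument.

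The $N=4$ exception lives in the first family. With $\tau=1$ one has $\sigma=K$ for $N=4$, so $u_i$ and $u_{i+2}$ are common poles of $sn,cn,dn$ \emph{simultaneously}; moreover, mod $4$, the window $\{j-2,j-1,j,j+1\}$ equals $\{1,2,3,4\}$ for every $j$, so \emph{every} term $\cos\psi_j$ depends on both coincident poles, producing a double pole whose vanishing is no longer automatic. The cleanest way to settle this case is direct: using the elementary $N=4$ vertex and pedal formulas of \cref{app:bicentric-vertices-n34} and \cref{app:pedal-perimeters-n34}, compute $\Sigma$ in closed form for both choices of limiting point; the $\ell_2$ expression is constant, while the $\ell_1$ expression is not (equivalently, the double-pole residues cancel for $\ell_2$ but not for $\ell_1$). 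Combining this with the generic argument above gives constancy of $\Sigma$ for every $N\ge 3$ and every $\tau$, with the sole exception of the $\ell_1$-pedal when $N=4$.

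I expect the obstacle to be twofold. First, massaging $\cos\psi_j$ --- especially the short diagonal $|q_{j-1}-q_{j+1}|$, for which, unlike the pedal sides, there is no obvious circle through $\ell$ and the two endpoints --- into a form in which the \eqref{eqn:zpole} cancellations are transparent. Second, and more seriously, controlling the denominator-zero poles for general $N$ and, for the coincident-pole cases ($N$ even), showing that the higher-order singular parts actually cancel --- and understanding why, for $N=4$, this succeeds for $\ell_2$ but fails for $\ell_1$, the limiting point that becomes a focus of the confocal polar image and at which $N=4$ billiard orbits are themselves degenerate. This last point is precisely what keeps \cref{conj:limiting-sum-cosines} a conjecture rather than a theorem.
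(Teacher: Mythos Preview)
The paper does not prove this statement. It is explicitly introduced with ``Though not yet proved, experimental evidence suggests:'' and is labeled a \emph{Conjecture}; the only supporting material is the explicit $N=3$ computation in \cref{app:pedal-perimeters-n34} (where all three cosine sums coincide) and the $N=4$ observations (where the $\ell_2$-pedal is degenerate with sum $4$ and the $\ell_1$-pedal sum is nonconstant). So there is no ``paper's own proof'' to compare your proposal against.

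As for the proposal itself: you have correctly identified why the template of \cref{thm:bicentric-sum} and \cref{thm:bicentric-pedal-perimeter} does not transfer cleanly. In those two theorems the quantity to be shown constant is a \emph{polynomial} in $sn,cn,dn$ at the shifted arguments, so the only possible poles are the common simple poles of the Jacobi functions, and the symmetry \eqref{eqn:zpole} kills them. Your law-of-cosines expression for $\cos\psi_j$ is a genuine \emph{rational} function, with $s_{j-1}s_j$ in the denominator; the zeros of $s_j$ (coming from zeros of $r_j$ or of $dn(u_j)$) create a new family of candidate poles that the $\eqref{eqn:zpole}$ mechanism does not address. Your geometric heuristic that ``when a pedal side collapses the two adjacent cosines have opposite leading parts'' is plausible on the real locus but is not, as you yourself note, a proof that the meromorphic sum has vanishing principal part there. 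Likewise, for even $N$ (and in particular $N=4$ with $\sigma=K$) the coincident-pole argument you sketch explains \emph{why} the generic cancellation might fail, but does not establish that it \emph{does} fail for $\ell_1$ and \emph{does not} for $\ell_2$ beyond deferring to the explicit $N=4$ formulas. In short, your outline matches the authors' evident intuition and pinpoints the obstruction, but the two gaps you flag --- the denominator-zero poles for general $N$, and the higher-order cancellation for even $N$ --- are real, and closing them is exactly what would be needed to upgrade the statement from conjecture to theorem. You end by saying as much, so your ``proof proposal'' is better read as a proof \emph{strategy} with honestly identified missing steps.
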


\section{A Tale of Five Polygons}
\label{sec:five-polys}
Illustrated in \cref{fig:five-polys} is the bicentric family along with its two limiting-pedals and its two polar images (elliptic and hyperbolic billiards), each with respect to a limiting point. While N-periodics in the elliptic billiard conserve perimeter, their hyperbolic version conserve {\em signed} perimeter, i.e., the length of a trajectory segment touching both hyperbola branches (resp. a single branch) is subtracted (resp. added) to the perimeter.

As shown in \cref{fig:bicentric-diagram}, the bicentric family can be regarded as a  ``hub'' from which four derived polygon families can be obtained, all of which conserve both (signed) perimeter and the sum of cosines. Bicentric polygons themselves have variable perimeter.

\begin{figure}
    \centering
    \includegraphics[width=\textwidth]{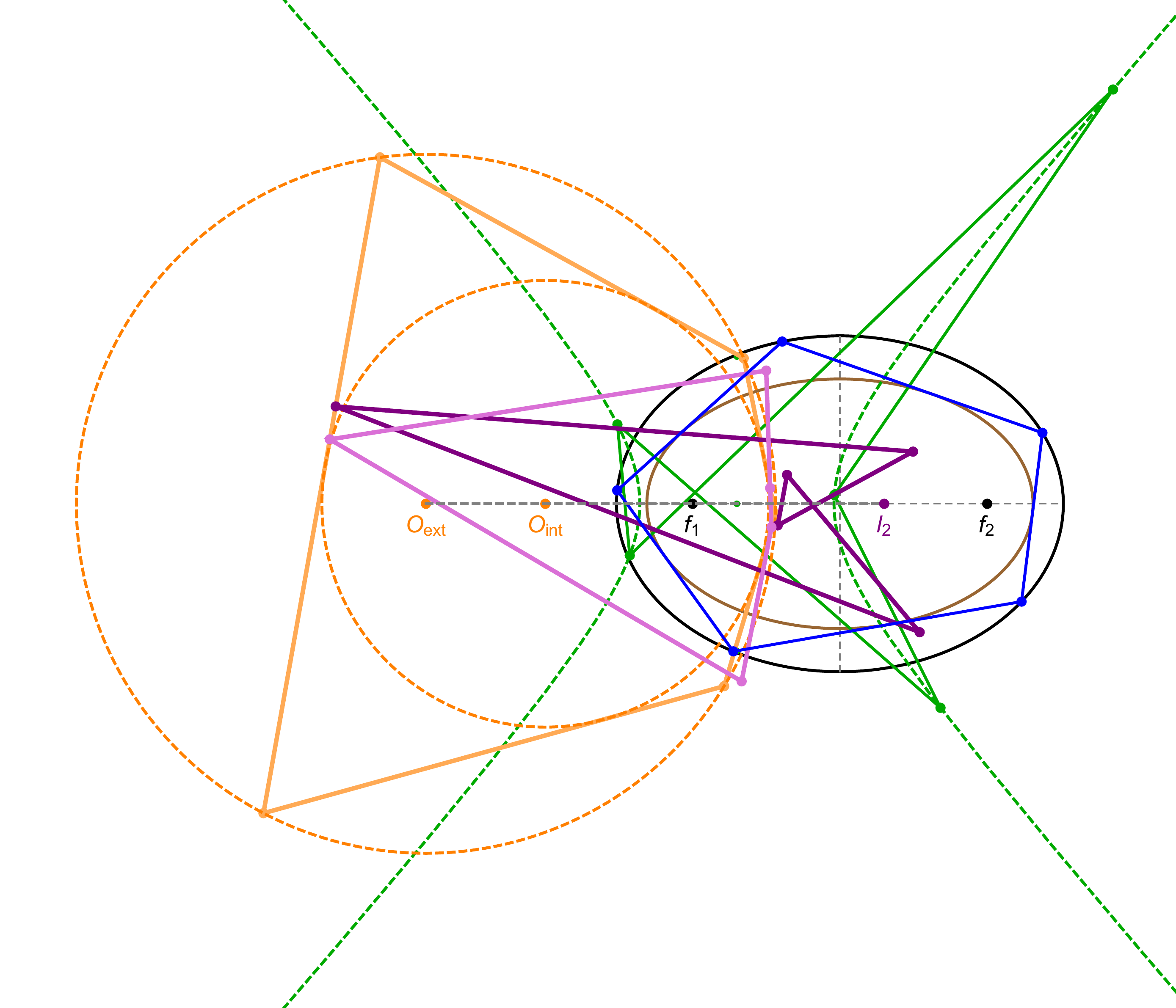}
    \caption{The bicentric family (solid orange) and its two polar images: the elliptic billiard (blue) and the hyperbolic billiard (green). Also shown are the two pedal polygons (pink and purple) with respect to the limiting points $\ell_1=f_1$ and $\ell_2$. All but the bicentrics have constant perimeter. All five families conserve their sum of cosines.}
    \label{fig:five-polys}
\end{figure}

\begin{figure}
    \centering
    \includegraphics[trim=100 50 100 50,clip,width=\textwidth]{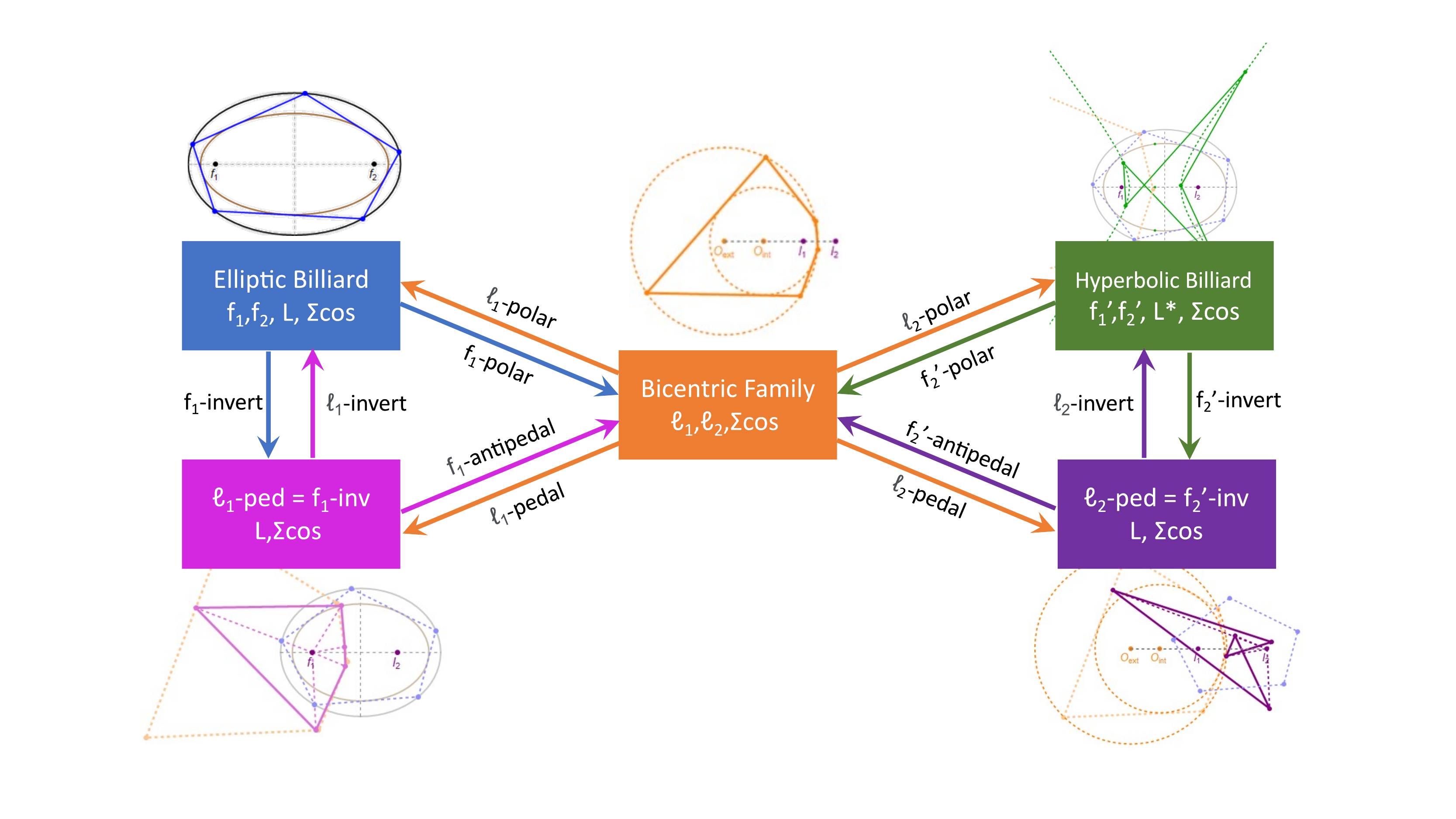}
    \caption{The bicentric family (orange, center) is the hub from which four polygon families can be derived: the (i) elliptic (resp. (ii) hyperbolic) billiard with foci $f_1=\ell_1,f_2$ (resp. $f_1'$ and $f_2'=\ell_2$) is the bicentric polar image with respect to $\ell_1$ (resp. $\ell_2$); (iii) the first (resp. second) pedal is obtained with respect to $\ell_1$ (resp. $\ell_2$). These are the inversive image of the elliptic and hyperbolic billiards with respect to $l_1=f_1$ or $l_2=f_2'$, respectively. With the exception of the bicentrics, all 4 derived families conserve perimeter $L$ (in the case of the hyperbolic billiard it is the {\em signed} perimeter $L^*$ which is conserved). All 5 families conserve sum of cosines, except for $N=4$ $\ell_1$-pedals.}
    \label{fig:bicentric-diagram}
\end{figure}

\section{List of Videos}
\label{sec:videos}
Videos illustrating some of the above phenomena are listed on Table~\ref{tab:playlist}.

\begin{table}
\small
\begin{tabular}{|c|c|l|l|}
\hline
id & N & Title & \textbf{youtu.be/<.>}\\
\hline
01 & 5 & {Invariant-perimeter limiting point pedals} &
\href{https://youtu.be/8m21fCz8eX4}{\texttt{8m21fCz8eX4}}\\
02 & 3...8 & {Bicentric Pedals, Polars, and Inversions I} &
\href{https://youtu.be/jhXDKRFLpVk}{\texttt{jhXDKRFLpVk}}\\
03 & 3...8 & {Bicentric Pedals, Polars, and Inversions II} &
\href{https://youtu.be/A7F3szW7rUE}{\texttt{A7F3szW7rUE}}\\
04 & 3...8 & {Bicentric Pedals, Polars, and Inversions III} &
\href{https://youtu.be/6TmaezNFrOs}{\texttt{6TmaezNFrOs}}\\
05 & 4 & {Bicentric Pedals, Polars, and Inversions IV} &
\href{https://youtu.be/fZe6elRTfeA}{\texttt{fZe6elRTfeA}}\\
06 & 5 & \makecell[lt]{A Rose in the Elliptic Garden:\\the Invariant-Perimeter, Focus-Inversive Family} &
\href{https://youtu.be/wkstGKq5jOo}{\texttt{wkstGKq5jOo}}\\
07 & 5 & \makecell[lt]{Focus-inversive polygons of elliptic Billiard\\self-intersected 5-periodics} &
\href{https://youtu.be/LuLtbwkfSbc}{\texttt{LuLtbwkfSbc}}\\
08 & 5 & \makecell[lt]{Inversive arcs of elliptic billiard N-periodic\\segments and the bicentric family} &
\href{https://youtu.be/mXkk_4RYrnU}{\texttt{mXkk\_4RYrnU}}\\
09 &  6 & \makecell[lt]{Focus-inversive polygons of elliptic Billiard\\self-intersected 6-periodics} &
\href{https://youtu.be/7lXwjXj-8YY}{\texttt{7lXwjXj-8YY}}\\
10 & 7 & \makecell[lt]{Focus-inversive polygons of elliptic billiard\\self-intersected 7-periodics I} &
\href{https://youtu.be/BRQ39O9ogNE}{\texttt{BRQ39O9ogNE}}\\
11 & 7 & \makecell[lt]{Focus-inversive polygons of elliptic billiard\\self-intersected 7-periodics II} &
\href{https://youtu.be/gf_aHyvbqOY}{\texttt{gf\_aHyvbqOY}}\\
12 & 8 & \makecell[lt]{Focus-inversive polygons of elliptic billiard\\self-intersected 8-periodics I} &
\href{https://youtu.be/5Lt9atsZhRs}{\texttt{5Lt9atsZhRs}}\\
13 & 8 & \makecell[lt]{Focus-inversive polygons of elliptic billiard\\self-intersected 8-periodics II}&
\href{https://youtu.be/93xpGnDxyi0}{\texttt{93xpGnDxyi0}}\\
\hline
14* & 5 & \makecell[lt]{Circular loci of focus-inversive centroids\\of elliptic billiard simple 5-periodics} &
\href{https://youtu.be/hjzW84ZZApA}{\texttt{jzW84ZZApA}}\\
15* & 5 &\makecell[lt]{Circular loci of focus-inversive centroids\\of elliptic billiard self-intersected 5-periodics} &
\href{https://youtu.be/7bzID9SVwqM}{\texttt{7bzID9SVwqM}}\\
16* & 6 & \makecell[lt]{Focus-inversive polygons of elliptic billiard\\6-periodics and the null-area antipedal polygon} & 
\href{https://youtu.be/fOAES-CzjNI}{\texttt{fOAES-CzjNI}}\\
17* & 5 & \makecell[lt]{Invariant area ratio of focus-inversives\\to elliptic billiard N-periodics} & \href{https://youtu.be/eG4UCgMkKl8}{\texttt{eG4UCgMkKl8}}\\
18* & 5 & \makecell[lt]{Invariant product of areas amongst the two\\focus-inversive elliptic billiard N-periodics (odd N)} & \href{https://youtu.be/bTkbdEPNUOY}{\texttt{bTkbdEPNUOY}}\\
\hline
\end{tabular}
\caption{Videos illustrating some phenomena presented herein. The last column is clickable and provides the YouTube code. The entries whose id has an asterisk (*) represent phenomena detected experimentally but not yet proved.}
\label{tab:playlist}
\end{table}

\section*{Acknowledgements}
\noindent We would like to thank Arseniy Akopyan, Sergei Tabachnikov, and Jair Koiller for invaluable discussions during the discovery phase. The second author is fellow of CNPq and coordinator of Project PRONEX/ CNPq/ FAPEG 2017 10 26 7000 508.

\appendix

\section{Polar Pedal Transformations}
\label{app:polar-pedal}

We review properties of polar and pedal transformations. A detailed treatment is found in \cite{akopyan2007-conics,stachel2019-conics}.

In the discussion that follows, all geometric objects are contained in a fixed plane. Let $\C$ be a circle centered at $f_1$. The polar transformation with respect to $\C$ maps each straight line not passing through $f_1$ into a point, and maps each point different from $f_1$ into a straight line. This is done in the following manner.

 Let $p\neq f_{1}$ be a point and let ${p}^\dagger$ be the inversion of $p$ with respect to $\C$. The straight line $L_{p}$ that passes through ${p}^\dagger$ and is orthogonal to the line joining $p$ and ${p}^\dagger$ is the polar of $p$ with respect to $\C$. Conversely, a line $L$ not passing through $f_1$ has a point $q$ as its pole with respect to $\C$ if $L=L_{q}$.

For a smooth curve $\gamma$ not passing through $f_1$, we can define the polar curve $\gamma^{\star}$ in two equivalent ways. Let $p$ be a point of $\gamma$ and $T_{p}\gamma$ the tangent line to $\gamma$ at $p$, we define $p^{\star}=(T_{p}\gamma)^{\star}$, and $\gamma^{\star}$ is the curve generated by $p^{\star}$ as $p$ varies along $\gamma$. We can also think of $\gamma^{\star}$ as the curve that is the envelope of the 1-parameter family of lines $L_{p}$, where $p$ is a point of $\gamma$.

The notion of a polar curve can be naturally extended to polygons in the following manner: let $L_{j}$, $j=1,2,...,N$ be the consecutive sides of a planar polygon $\P$, and let $q_{j}$ be the corresponding poles, then this indexed set of points are the vertices of what we call the polar polygon $\P^{\star}$. Alternatively, we can consider the polars of vertices of $\P$, and their consecutive intersections do define the vertices of $\P^{\star}$.

Although the next results are certainly classical, we couldn't find them explicitly in the literature, so we include them for the reader's convenience.

\begin{lemma}
Let $\E$ be an ellipse and $f_1$ one its the foci. Then the polar curve $\E^{\star}$ with respect to a circle $\C$ centered at $f_1$ is a circle. Let $\H$ be a hyperbola and $f_1$ one of its foci. Then the polar curve $\H^{\star}$ with respect to a circle $\C$ centered at $f_1$ is a circle minus two points.
\label{lem:polar}
\end{lemma}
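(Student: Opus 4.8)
The plan is to use the classical focus-directrix property of conics together with the explicit description of the polar transformation. First I would set up coordinates with the focus $f_1$ at the origin, so the circle $\C$ has equation $|p|^2 = \rho^2$ for some radius $\rho$, and recall that in polar coordinates centered at a focus, a conic of eccentricity $e$ and semi-latus rectum $\ell$ satisfies $r = \ell/(1 + e\cos\theta)$ (with appropriate sign/orientation conventions). The key observation is that the polar of the tangent line $T_p\E$ at a point $p\in\E$ is the point $p^\star$ obtained by inverting in $\C$ the foot of the perpendicular from $f_1$ to $T_p\E$; equivalently, $p^\star$ is the inversion of the pedal point of $f_1$ on $T_p\E$. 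So $\E^\star$ is the inversion (with respect to $\C$) of the pedal curve of $\E$ with respect to the focus $f_1$.

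The crucial classical fact I would then invoke (or prove in one line from the focus-directrix equation) is that the pedal curve of a conic with respect to a focus is a circle: for an ellipse it is the auxiliary circle (the circle on the major axis as diameter), and for a hyperbola it is again the auxiliary circle but with the two points where the asymptote-directions meet it removed, since the tangent lines "at infinity" along the asymptotes have feet of perpendiculars that are not attained by actual finite tangent lines. Concretely, if the conic is $r = \ell/(1+e\cos\theta)$, the foot of the perpendicular from the focus onto the tangent at parameter $\theta$ traces $r_{\text{ped}} = $ (something like) $\ell/\sqrt{1 + 2e\cos\theta + e^2}$ arranged so that it is a circle not through the focus when $e<1$, and a circle punctured at two points when $e>1$ (the punctures corresponding to $1 + 2e\cos\theta + e^2 = $ the value forcing the pedal point to infinity, i.e. the directions of the asymptotes).

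Next I would apply the inversion step: inversion with respect to a circle centered at $f_1$ sends circles not through $f_1$ to circles not through $f_1$. Since the focal pedal circle of the ellipse does not pass through $f_1$ (the focus lies strictly inside the auxiliary circle), its image $\E^\star$ under inversion in $\C$ is again an honest circle. For the hyperbola, the focal pedal curve is the auxiliary circle minus two points, and $f_1$ does not lie on it, so its inverse image is a circle minus two points, giving exactly the claimed statement for $\H^\star$. I would also double-check that $f_1$ does not lie on the pedal circle in either case — for the ellipse the center-to-focus distance $c$ is less than the auxiliary radius $a$, and for the hyperbola the focus is outside the auxiliary circle but still not on it — so inversion is well-defined on the whole pedal curve.

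I expect the main obstacle to be bookkeeping the hyperbola case cleanly: one must argue carefully that precisely two points are missing from the pedal curve and that they survive as the two excluded points of $\H^\star$, rather than, say, being filled in or multiplied by the inversion. The cleanest route is to parametrize the tangent lines of $\H$ by their angle $\psi\in(-\psi_0,\psi_0)\cup(\psi_0,\pi]\ldots$ (the angles excluded being those parallel to an asymptote), show the pedal foot $q(\psi)$ extends continuously to a full circle as $\psi$ ranges over all of $[0,2\pi)$ except the two asymptotic directions, and that the two limiting positions $q(\pm\psi_0)$ are genuine points of the auxiliary circle never attained by a finite tangent; inverting then transports this punctured circle to a punctured circle. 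A secondary, purely expository, obstacle is reconciling the "polar of a line is the inversion of its pole-foot" description in the excerpt with the pedal-curve language, but that is immediate from the definition of the polar of a point given just above the lemma.
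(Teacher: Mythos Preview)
Your approach is correct and genuinely different from the paper's. The paper simply parametrizes the conic in focus-centered polar form as $\gamma(t)=\frac{a(1-e^2)}{1+e\cos t}(\cos t,\sin t)$, computes the pole of each tangent line directly, and obtains the explicit formula $\gamma^\star(t)=\bigl(\frac{e+\cos t}{a(1-e^2)},\frac{\sin t}{a(1-e^2)}\bigr)$, which is visibly a circle; the two missing points in the hyperbola case are the two parameter values with $1+e\cos t=0$. Your route instead factors the polar map as ``pedal with respect to $f_1$'' followed by ``inversion in $\C$'', invokes the classical fact that the focal pedal of a conic is the auxiliary circle (punctured at the feet of the perpendiculars to the asymptotes when $e>1$), and then uses that inversion preserves circles not through the center. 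Your argument is more conceptual and explains geometrically why a circle must appear; the paper's computation is self-contained and, importantly for their later appendices, produces the explicit center and radius of $\gamma^\star$ needed to identify the limiting points and the confocal pair. The only place to tighten your write-up is the hyperbola bookkeeping: rather than the vague ``something like'' pedal formula, just note that the feet of the perpendiculars from $f_1$ to the two asymptotes are the two limit points of the pedal circle not attained by any finite tangent, and that inversion carries these two punctures to two punctures on $\H^\star$.
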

\begin{proof}

We will use polar coordinates for our computations. Without loss of generality, let $f_{1}=(0,0)$ and consider the parametrized conic given by:

\[ \gamma(t)=\left[\frac{a(1-e^2)}{1+e\cos{t}}\cos{t}, \frac{a(1-e^2)}{1+e\cos{t}}\sin{t}\right] \]

\noindent where, if $e>1$ the trace of $\gamma$ is a hyperbola and if $e<1$ the trace of $\gamma$ is an ellipse. The expression for the polar curve $\gamma^{\star}(t)$ is obtained by direct computation: compute the unit normal $\mathbf{n}(t)$ to $\gamma(t)$, and  the distance $d(t)$ from the tangent line through $\gamma(t)$ to $f_{1}$. This yields:
 
\[ \gamma^{\star}(t)=\left[\frac{e+\cos{t}}{a(1-e^2)},\frac{\sin{t}}{a(1-e^2)}\right] \]

\noindent whose trace is clearly contained in a circle. For the hyperbola, the parameter $t$ is such that $1+e\cos(t) \neq 0 $, this is why $\H^{\star}$ is a circle minus two points.
  
\end{proof}

\begin{lemma}
\label{lem:limit-focus}
Let $\E_{1}$ and $\E_{2}$ be two confocal ellipses and $\E_{1}^{\star}$ and $\E_{2}^{\star}$ be the circles as in Lemma \ref{lem:polar}, then $f_{1}$ is a limiting point of the pencil of circles defined by $\E_{1}^{\star}$ and $\E_{2}^{\star}$. In a similar way, let $\E$ and $\H$ be respectively an ellipse and hyperbola that are confocal, and let $\E^{\star}$ and $\H^{\star}$ be the circle and the circle minus 2 points, as in Lemma \ref{lem:polar}, then $f_{1}$ is a limiting point of the pencil of circles defined by $\E^{\star}$ and the circle that contains $\H^{\star}$.
\end{lemma}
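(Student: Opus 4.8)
The plan is to show that inversion in the circle $\C$ carries the pair $\E_1^{\star},\E_2^{\star}$ to a pair of \emph{concentric} circles; by the defining property of a limiting point (inversion in a circle about such a point sends the pair to a concentric pair, and there are only two points with that property for a disjoint pair), this forces the centre $f_1$ of $\C$ to be a limiting point of the pencil they span. Place $f_1=(0,0)$ with the common focal axis of the confocal family along the $x$-axis, and — using that a homothety centred at $f_1$ carries pencils to pencils, limiting points to limiting points, and fixes $f_1$ — take $\C$ to be the unit circle. Write $\iota$ for inversion in $\C$ and $Z=(-c,0)$ for the common centre of the confocal family, where $2c>0$ is the fixed interfocal distance, so $Z\neq f_1$.

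First I would recall from the proof of \cref{lem:polar} that any conic of the family, put in the focal polar form used there with eccentricity $e$ and $\rho_0:=a(1-e^{2})$ (so $a$ is the semi-axis and $ae=c$, valid for $e<1$ and $e>1$ alike), has polar curve the circle $\gamma^{\star}(t)=\frac{1}{\rho_0}\bigl(e+\cos t,\ \sin t\bigr)$, of radius $1/|\rho_0|$ and centre $(e/\rho_0,0)$. Next I would invert it: since $|\gamma^{\star}(t)|^{2}=(1+2e\cos t+e^{2})/\rho_0^{2}$,
\[
\iota\bigl(\gamma^{\star}(t)\bigr)=\frac{\rho_0}{1+2e\cos t+e^{2}}\,\bigl(e+\cos t,\ \sin t\bigr),
\]
and expanding $\bigl|\iota(\gamma^{\star}(t))-Z\bigr|^{2}$ with $\rho_0=a(1-e^{2})$, $ae=c$, together with the identity
\[
\bigl(2e+(1+e^{2})\cos t\bigr)^{2}+(1-e^{2})^{2}\sin^{2}t=\bigl(1+e^{2}+2e\cos t\bigr)^{2},
\]
shows this equals $a^{2}$ identically in $t$. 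Hence $\iota(\gamma^{\star})$ is the circle $|X-Z|=a$, centred at the \emph{common} centre $Z$ with radius the conic's semi-axis $a$ — i.e.\ the auxiliary circle of the conic (equivalently, the classical fact that the focal pedal of a central conic is its auxiliary circle, recovered here from \cref{lem:polar}). For the hyperbola the parameter values with $1+e\cos t=0$ drop out, which is precisely the ``circle minus two points'' of \cref{lem:polar}, and ``the circle containing $\H^{\star}$'' is exactly this auxiliary circle.

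To finish, apply this to the two given confocal conics, be they $\E_1,\E_2$ or $\E,\H$: their $\iota$-images are the circles $|X-Z|=a_1$ and $|X-Z|=a_2$, which are concentric (both centred at $Z$), distinct ($a_1\neq a_2$ for two ellipses, while $a_{\E}>c>a_{\H}$ in the mixed case), and neither passes through $f_1$ since $c\neq a_i$. Thus $\iota$, inversion centred at $f_1$, sends $\{\E_1^{\star},\E_2^{\star}\}$ to a concentric pair and, being an involution, sends that concentric pair back; as $Z\neq f_1$ these concentric circles are not centred at $f_1$, so $\langle\E_1^{\star},\E_2^{\star}\rangle$ is a non-intersecting pencil with two distinct limiting points, and by the definition of limiting point one of them is $f_1$. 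The ellipse–hyperbola statement follows word for word since \cref{lem:polar} treats $e>1$ with the same formula.

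The one genuine computation is the middle step — identifying $\iota(\gamma^{\star})$ with the auxiliary circle; it is routine but not instantaneous, the cleanest path being the trigonometric identity displayed above. Should one prefer to avoid even that, there is a wholly elementary alternative that never leaves \cref{lem:polar}: writing $C_i$ for the circle $\E_i^{\star}$ with centre $(m_i,0)$ and radius $r_i$, one checks that a point $(\ell,0)$ on the line of centres with $m_i\neq\ell$ is a point-circle — hence a limiting point — of the pencil $\langle C_1,C_2\rangle$ exactly when $\operatorname{pow}\bigl((\ell,0),C_i\bigr)/(m_i-\ell)$ is independent of $i$; taking $\ell=0$ and inserting $m_i=e_i/\rho_i$, $r_i=1/|\rho_i|$ with $\rho_i=a_i(1-e_i^{2})$ from the first step gives $\operatorname{pow}(f_1,\E_i^{\star})/m_i=(e_i^{2}-1)/(e_i\rho_i)=-1/c$ for every conic of the family, which again identifies $f_1$ as a limiting point and simultaneously covers both cases of the lemma.
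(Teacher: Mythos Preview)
Your proof is correct. Both you and the paper invoke the same characterization of a limiting point (inversion about it sends the pair to concentric circles), but the executions differ. The paper treats the limiting abscissa as an unknown $x$, writes down the concentricity condition as a quadratic in $x$ depending on $a_i,e_i$, and then uses the confocal constraint $a_1e_1=a_2e_2$ to factor it and read off $f_1$ as a root; in particular that route simultaneously produces the \emph{second} limiting point $x=-1/c$. You instead fix $x=0$ from the outset and explicitly invert $\gamma^{\star}$, discovering that $\iota(\gamma^{\star})$ is precisely the auxiliary circle $|X-Z|=a$ of the conic; since every member of the confocal family shares the centre $Z=(-c,0)$, concentricity is immediate. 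Your approach is more constructive and yields the pleasant geometric identification with the auxiliary circle (equivalently, the classical focal-pedal fact), at the cost of a short trigonometric identity; your alternative via $\operatorname{pow}(f_1,C_i)/m_i=-1/c$ is essentially the ``verify the root'' version of the paper's ``solve the quadratic'' argument, and is the shortest of the three.
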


\begin{proof}
Given two circles $\C_{1}$ and $\C_{2}$, a classical result states that the limiting points $\delta_{\pm}$ of the pencil of circles determined by $\C_{1}$ and $\C_{2}$ are such that the inversion of $\C_{1}$ and $\C_{2}$ with respect to circles centered on $\delta_{\pm}$ are concentric.

If we denote by $a_{i}$ and $e_{i}$, $i=1,2$, respectively, the semi-major diameter and eccentricity of the ellipses $\E_{1}$ and $\E_{2}$, then, by symmetry, we can define an unknown limiting point as $\delta_{p}=(x,0)$, and the concentric circle condition then becomes a quadratic equation in the variable $x$, where the coefficients depend on $a_{1}$, $a_{2}$, $e_{1}$ and $e_{2}$. Using the fact that $\E_{1}$ and $\E_{2}$ are confocal, which is equivalent to $e_{1}a_{1}=e_{2}a_{2}$, and with some algebraic manipulations, the quadratic equation can be written as:

\[ (a_{1}-a_{2})(a_{1}+a_{2})(e_{2}a_{2}x+1)=0 \]

\noindent so $f_{1}=(0,0)$ is indeed one of the limiting points of the pencil of circles.
\end{proof}

\section{Polar Image of Bicentric Pair}
\label{app:bicentric-to-confocal}
Consider the pair of nested circles:
 
\[\C_{int}:\;x^2+y^2=r^2,\;\;\;\C_{ext}:\;(x+d)^2+y^2=R^2\]
Their limiting points $\ell_1$ and $\ell_2$ are given by \cite[Limiting Points]{mw}:
 
 \[ \ell_1= (R^2 - d^2 - r^2 -\Delta)/(2d),\;\;\;\ell_2=(R^2 - d^2 - r^2 + \Delta)/(2 d)\]
where:
 \[ \Delta=\sqrt { \left( d+R+r \right)  \left( R-d+r \right)  \left( R+d-r
 \right)  \left( R-d-r \right) }\]

Notice $\ell_1$ (resp. $\ell_2$) is internal (resp. external) to the circle pair. Below we show that the polar image of the  $\C_{int},\C_{ext}$ pair with respect to a circle of radius $\rho$ centered on $\ell_1$ (resp. $\ell_2$) is a confocal pair of ellipses (resp. hyperbolas).

\begin{lemma}
The polar image of $\C_{int}$ with respect to $\ell_1 $ is the ellipse $\E$ centered at
 \[ \O_e=\left[  {\rho}^2\frac{d}{\Delta} +\frac{k-\Delta}{2d},0\right] \]
where $k=R^2 - d^2 - r^2$. Its semi-axes are given by:
 
\[  a^2={\rho}^4\left(\frac{2d^2r^2+ \Delta(k+\Delta)}{ 2 \Delta^2 r^2}\right),\;\;\;b^2= {\rho}^4\left(\frac{k+ \Delta}{2 \Delta r^2}\right) \]
 \end{lemma}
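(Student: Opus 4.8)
The plan is to compute the polar image of the circle $\C_{int}$ directly, using the formula for the polar transformation with respect to a circle centered at $\ell_1$, and then recognize the resulting curve as an ellipse by completing the square. Concretely, I would parametrize $\C_{int}$ as $\gamma(t) = (r\cos t, r\sin t)$ and, for each $t$, find the pole of the tangent line $T_{\gamma(t)}\C_{int}$ with respect to the circle of radius $\rho$ centered at $\ell_1 = (\ell_1, 0)$. Since $\C_{int}$ is centered at the origin, its tangent line at $\gamma(t)$ is simply $x\cos t + y\sin t = r$; the pole of a line with respect to a circle of radius $\rho$ centered at $c$ is the point $c + \rho^2 (n)/h$, where $n$ is the unit normal to the line and $h$ is the signed distance from $c$ to the line. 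This makes $\gamma^\star(t)$ an explicit rational (in fact, after using $\cos t = x$-like substitution, linear-fractional in $(\cos t,\sin t)$) function of $t$.

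From there, the key steps are: (1) write out $\gamma^\star(t) = (X(t), Y(t))$ explicitly in terms of $r$, $\rho$, $\ell_1$; (2) eliminate $t$ to get a conic equation in $(X,Y)$ — this should come out to a quadratic with no $XY$ cross term because of the symmetry about the $x$-axis; (3) complete the square in $X$ to read off the center $\O_e$ and the semi-axes $a, b$; (4) substitute the closed form $\ell_1 = (R^2 - d^2 - r^2 - \Delta)/(2d) = (k-\Delta)/(2d)$ and simplify, using the factorization $\Delta^2 = (d+R+r)(R-d+r)(R+d-r)(R-d-r)$, to match the stated expressions. The identity $\Delta^2 = \bigl((R^2-r^2)^2 - 2d^2(R^2+r^2) + d^4\bigr) = k^2 - 4d^2r^2$ (equivalently $k^2 - \Delta^2 = 4d^2 r^2$) will be the main algebraic lever: it is precisely what lets $2d^2 r^2$ be rewritten in terms of $\Delta$ and $k$, which is why it appears in the numerator of $a^2$. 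Checking $e_1 a_1 = e_2 a_2$-type consistency, i.e. that the foci land symmetrically, follows from $a^2 - b^2$ simplifying to a perfect square times $\rho^4/\Delta^2$.

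I expect the main obstacle to be purely the bookkeeping in step (4): the raw elimination in step (2) produces an expression involving $\ell_1$, $r$, $\rho$ that is not obviously equal to the compact form in the statement, and one must repeatedly invoke $k^2 - \Delta^2 = 4d^2 r^2$ and $2d\,\ell_1 = k - \Delta$ to collapse it. A useful tactic to reduce this pain is to keep $\ell_1$ and $\Delta$ as symbols as long as possible, only substituting at the very end, and to verify the claimed center and semi-axes by back-substitution (plugging the asserted $\O_e$, $a$, $b$ into the conic and checking it vanishes identically in $t$) rather than deriving them forwards. One should also double-check the sign/branch: since $\ell_1$ is the limiting point \emph{internal} to the circle pair, $\ell_1$ lies inside $\C_{int}$, hence the signed distance $h = r - \ell_1\cos t$ is positive for all $t$ and never vanishes, which is what guarantees the polar image is a genuine (bounded, smooth) ellipse rather than an unbounded conic — this is the qualitative point that distinguishes it from the hyperbola case obtained from $\ell_2$. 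Finally, confocality with the polar image of $\C_{ext}$ is not needed for this particular lemma but is consistent with Lemma~\ref{lem:limit-focus} applied in reverse.
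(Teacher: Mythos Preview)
Your plan is correct and is precisely the kind of direct parametric computation the paper has in mind: the paper actually states this lemma (and its three companions in Appendix~B) without proof, treating it as a routine calculation, and the one place where it does spell out such a computation---Lemma~\ref{lem:polar} in Appendix~A---proceeds exactly as you describe (parametrize, compute the polar/pole explicitly, read off the conic). Your identification of the two algebraic levers $2d\,\ell_1 = k-\Delta$ and $k^2-\Delta^2 = 4d^2r^2$ is spot-on; together with the observation that the pole of the tangent line $x\cos t + y\sin t = r$ with respect to the circle of radius $\rho$ about $\ell_1$ lies at distance $\rho^2/(r-\ell_1\cos t)$ from $\ell_1$, this already shows the polar image is a conic in focal polar form with semi-latus rectum $\rho^2/r$ and eccentricity $|\ell_1|/r<1$, from which the stated $a,b,\O_e$ drop out.
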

\noindent Note that $c^2=a^2-b^2={\rho}^4d^2/{\Delta^2}$.
 
 \begin{lemma}
 The polar image of $\C_{ext}$ with respect to $\ell_1$ is an ellipse $\E'$ confocal with $\E$ with semi-axes given by:
 
\[  a'^2= {\rho}^4\frac{(2R^2d^2 + \Delta (k'+\Delta ))}{ 2 \Delta^2 R^2},\;\;\;b'^2=  {\rho}^4\left(\frac{k'+\Delta}{ 2 \Delta R^2}\right) \]
 \end{lemma}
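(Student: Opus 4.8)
\noindent The plan is to carry out, for $\C_{ext}$, the same polar-image computation used for $\C_{int}$ in the preceding lemma, and then to recognize confocality by comparing the resulting center and focal distance with those of $\E$. Translate the plane so that the limiting point $\ell_1$ lies at the origin; the reciprocating circle is then $x^2+y^2=\rho^2$, and $\C_{ext}$ becomes the circle of radius $R$ with center $(\nu,0)$, where $\nu=-d-\ell_1$. Because $\ell_1$ is internal to the nested pair it lies inside the disk bounded by $\C_{int}$, hence inside the one bounded by $\C_{ext}$, so $|\nu|<R$; this is exactly the condition guaranteeing that the polar image of $\C_{ext}$ is an ellipse rather than a hyperbola. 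Parametrizing $\C_{ext}$ and taking the pole of each tangent line --- equivalently, using the involutivity of polar reciprocation together with \cref{lem:polar}, which then identifies the polar dual of a circle about an interior point as an ellipse having that point as a focus --- one finds that $\E'$ has semilatus rectum $\rho^2/R$ and eccentricity $|\nu|/R$, whence
\[
a'^2=\frac{\rho^4R^2}{(R^2-\nu^2)^2},\qquad b'^2=\frac{\rho^4}{R^2-\nu^2},\qquad a'^2-b'^2=\frac{\rho^4\nu^2}{(R^2-\nu^2)^2},
\]
with center of abscissa $-\rho^2\nu/(R^2-\nu^2)$ in the translated frame.

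The heart of the proof is the reduction of $R^2-\nu^2$. Substituting $\ell_1=(k-\Delta)/(2d)$ with $k=R^2-d^2-r^2$ gives $\nu=(\Delta-m)/(2d)$, where $m:=R^2+d^2-r^2$; this is the exact counterpart of the quantity accompanying $\ell_1$ in the $\C_{int}$ case, with $k$ replaced by $m$. Expanding $\Delta^2=(d+R+r)(R-d+r)(R+d-r)(R-d-r)$ yields the identity $\Delta^2=m^2-4R^2d^2$, i.e. $(\Delta-m)(\Delta+m)=-4R^2d^2$, the analogue for $\C_{ext}$ of the relation $(\Delta-k)(\Delta+k)=-4d^2r^2$ used for $\C_{int}$. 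Rationalizing the denominator with it --- exactly as in the preceding lemma --- gives $R^2-\nu^2=2\Delta R^2/(m+\Delta)$, and substituting this into the formulas above produces
\[
a'^2=\rho^4\,\frac{2R^2d^2+\Delta(m+\Delta)}{2\Delta^2R^2},\qquad b'^2=\rho^4\,\frac{m+\Delta}{2\Delta R^2},
\]
which are the claimed expressions, with $k'=m=R^2+d^2-r^2$.

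To finish I check confocality with $\E$. From the last display $a'^2-b'^2=\rho^4d^2/\Delta^2$, which is precisely the value of $c^2$ recorded for $\E$ in the note after the preceding lemma. Computing the center of $\E'$ and translating back to the original frame gives abscissa $-\rho^2\nu/(R^2-\nu^2)+\ell_1=\rho^2d/\Delta+\ell_1$, equal to the abscissa of $\O_e$ --- once more the identity $\Delta^2=m^2-4R^2d^2$ drives the cancellation. Hence $\E$ and $\E'$ are concentric; being concentric, coaxial, and of equal $a^2-b^2$, they have the same pair of foci (indeed $\ell_1$ is a focus of each), so $\E'$ is confocal with $\E$.

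I expect the only genuinely laborious step --- as opposed to routine bookkeeping --- to be the reduction just described: seeing the denominators collapse to $2\Delta R^2/(m+\Delta)$ and the center to $\rho^2d/\Delta+\ell_1$ once the closed form of $\ell_1$ is plugged in. The single lever behind every cancellation is the identity $\Delta^2=m^2-4R^2d^2$; with it isolated, the whole computation is the $\C_{int}$ argument under the substitutions $r\leftrightarrow R$ and $k\leftrightarrow k'$, and no new conceptual ingredient is needed.
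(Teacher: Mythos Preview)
Your proof is correct and complete. The paper states this lemma in \cref{app:bicentric-to-confocal} without proof, simply recording the result as a computational fact (presumably verified by CAS), so there is no argument to compare against; your explicit derivation --- translating $\ell_1$ to the origin, reading off the ellipse parameters from the semilatus rectum $\rho^2/R$ and eccentricity $|\nu|/R$, and collapsing everything via the single identity $\Delta^2=k'^2-4R^2d^2$ --- supplies exactly the missing justification, and your confocality check via $a'^2-b'^2=\rho^4 d^2/\Delta^2$ and the coincidence of centers is precisely what the paper's terse note following the preceding lemma invites.
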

 \noindent where $k'=R^2 + d^2 - r^2$.
 
 \begin{lemma}
 The polar image of $\C_{int}$ with respect to $\ell_2 $ is the hyperbola $\H$ centered at
  \[  \O_h=\left[  {-\rho}^2\frac{d}{\Delta} +\frac{k+\Delta}{2d},0\right] \]
 with semiaxes given by:
 
\[  a_h^2={\rho}^4\left(\frac{  2d^2r^2-\Delta(k - \Delta)}{2  \Delta^2 r^2}\right),\;\;\;b_h^2= {\rho}^4\left(\frac{k - \Delta}{2 \Delta r^2}\right)\]
 \end{lemma}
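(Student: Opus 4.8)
The plan is to run the polar-reciprocal computation of the proof of \cref{lem:polar} once more, now with the reciprocating circle centered at the external limiting point $\ell_2$ in place of a focus. Write $\delta=\ell_2=(k+\Delta)/(2d)$ with $k=R^2-d^2-r^2$; note that proper nesting ($d<R-r$) forces $k>2dr>0$, so $\delta>0$. Parametrize the tangent lines of $\C_{int}\colon x^2+y^2=r^2$ by $x\cos t+y\sin t=r$ and take the pole of each with respect to the circle of radius $\rho$ about $(\delta,0)$. A short computation gives the polar image
\[
\gamma^{\star}(t)=\left(\delta+\frac{\rho^2\cos t}{r-\delta\cos t},\ \frac{\rho^2\sin t}{r-\delta\cos t}\right),
\]
which, read from $\ell_2$, is exactly the focal polar form $\rho_{\mathrm{pol}}=p/(1-e\cos\theta)$ of a conic with one focus at $\ell_2$, semi-latus rectum $p=\rho^2/r$, and eccentricity $e=\delta/r$. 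Since $\delta>r$ (equivalently, $\ell_2$ lies outside $\C_{int}$) we get $e>1$, so the image is a hyperbola $\H$; the two missing points at infinity are the poles of the two tangents from $\ell_2$ to $\C_{int}$, i.e.\ its asymptotic directions.

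The second step is to pass from the focal polar form to the standard Cartesian equation. From $\rho_{\mathrm{pol}}=p/(1-e\cos\theta)$ one reads off $a_h=p/(e^2-1)$, $b_h^2=p^2/(e^2-1)$, and that the transverse axis lies along the $x$-axis with center at offset $-pe/(e^2-1)$ from $\ell_2$. The one slightly delicate point here is the vertex bookkeeping proper to a hyperbola: the value of $\rho_{\mathrm{pol}}$ at $\theta=0$ is negative and must be re-interpreted as a point in the $\theta=\pi$ direction, after which both transverse-axis vertices land on the same side of $\ell_2$ and the center is their midpoint. Everything after this is substitution.

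The third step is the radical algebra, whose engine is the single identity $k^2-\Delta^2=4d^2r^2$, equivalently $(k-\Delta)(k+\Delta)=4d^2r^2$; this also says $\ell_1\ell_2=r^2$, i.e.\ the limiting points are mutually inverse in $\C_{int}$, and it follows by expanding $\Delta^2=((R+r)^2-d^2)((R-r)^2-d^2)=(R^2+r^2-d^2)^2-4R^2r^2$ and using $R^2+r^2-d^2=k+2r^2$. With it one finds $e^2-1=\Delta(k+\Delta)/(2d^2r^2)$, hence $b_h^2=2\rho^4 d^2/(\Delta(k+\Delta))$, which the identity rewrites as $\rho^4(k-\Delta)/(2\Delta r^2)$; likewise $a_h^2=4\rho^4 d^4r^2/(\Delta^2(k+\Delta)^2)$ becomes $\rho^4\bigl(2d^2r^2-\Delta(k-\Delta)\bigr)/(2\Delta^2 r^2)$, and the center offset $-pe/(e^2-1)$ collapses to $-\rho^2 d/\Delta$, giving $\O_h=\bigl(-\rho^2 d/\Delta+(k+\Delta)/(2d),\,0\bigr)$. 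As a built-in check, $a_h^2+b_h^2=\rho^4 d^2/\Delta^2$, so $\O_h+(\rho^2 d/\Delta,0)=\ell_2$: the focus of $\H$ is indeed $\ell_2$, consistent with the focal polar form and with the companion statement that the $\C_{ext}$-image is a hyperbola confocal with $\H$.

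The main obstacle is therefore not conceptual but the bookkeeping of step two together with pushing the radicals of step three through cleanly; it is precisely the identity $(k-\Delta)(k+\Delta)=4d^2r^2$ that makes all the $\rho^4/\Delta$ and $\rho^4/\Delta^2$ factors line up and reduces the whole verification to a handful of lines.
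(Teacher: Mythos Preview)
Your proof is correct. The paper itself states this lemma in \cref{app:bicentric-to-confocal} without proof, presenting it (together with the three companion lemmas for $\C_{int},\C_{ext}$ versus $\ell_1,\ell_2$) as a list of computational results; so there is no paper-side argument to compare against beyond the general polar computation of \cref{lem:polar}.

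Your route is the natural one and is essentially the inverse of the paper's proof of \cref{lem:polar}: rather than parametrize the conic and compute its polar circle, you parametrize the tangent lines $x\cos t+y\sin t=r$ of $\C_{int}$, take their poles with respect to the $\rho$-circle about $\ell_2=(\delta,0)$, and recognize the resulting curve $\gamma^\star(t)-(\delta,0)=\dfrac{\rho^2}{r-\delta\cos t}(\cos t,\sin t)$ as a conic in focal polar form with $p=\rho^2/r$ and $e=\delta/r$. The observation $k>2dr$ under nesting gives $\delta>r$, hence $e>1$ and a hyperbola. The substitutions $a_h=p/(e^2-1)$, $b_h^2=p^2/(e^2-1)$, center offset $-pe/(e^2-1)$ are the standard focal-form dictionary, and your key algebraic identity $(k-\Delta)(k+\Delta)=4d^2r^2$ (equivalently $\ell_1\ell_2=r^2$) is exactly what collapses the radicals; in particular $2d^2r^2-\Delta(k-\Delta)=(k-\Delta)^2/2$ recovers the paper's form of $a_h^2$, and the focus check $a_h^2+b_h^2=\rho^4 d^2/\Delta^2$ confirms $\ell_2$ as a focus of $\H$, dovetailing with the next lemma on $\C_{ext}$.

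In short: you have supplied a clean, self-contained proof where the paper only records the formulas.
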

 \noindent Note that $c_h^2=a_h^2+b_h^2={{\rho}^4d^2}/{\Delta^2}$. Note also that $c=c_h$.
 
 \begin{lemma} The polar image of $\C_{out}$ with respect to $\ell_2$ is a hyperbola $\H'$ confocal with $\H$. Its semiaxes are given by:
 
\[ a_h'^2= {\rho}^4\left(\frac{2R^2d^2 - \Delta (k' - \Delta)}{ 2 \Delta^2 R^2}\right),\;\;\; b_h'^2= {\rho}^4 \left(\frac{k' - \Delta}{ 2 \Delta R^2}\right)\]
 \end{lemma}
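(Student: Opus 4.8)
The plan is to run the same computation as in the three preceding lemmas, now placing the centre of polarity at the \emph{external} limiting point $\ell_2=(k+\Delta)/(2d)$, with $k=R^2-d^2-r^2$. I would parametrize $\C_{ext}$ by $p(t)=(-d+R\cos t,\,R\sin t)$ and form, for each $t$, the polar of $p(t)$ with respect to the circle of radius $\rho$ centred at $(\ell_2,0)$, i.e.\ the line through the inversion $p(t)^\dagger$ orthogonal to the segment $\ell_2\,p(t)$. Taking the envelope of this one-parameter family of lines — equivalently, reading Lemma~\ref{lem:polar} in reverse, since polarity turns a circle into a conic with a focus at the centre of inversion — one obtains the focal polar form
\[
x-(\ell_2,0)=\frac{\rho^2/R}{\,1-e\cos t\,}\,(\cos t,\sin t),\qquad e=\frac{d+\ell_2}{R},
\]
a conic with a focus at $(\ell_2,0)$, semi-latus rectum $\rho^2/R$ and eccentricity $e$; the minus sign in the denominator merely records that the centre $(-d,0)$ of $\C_{ext}$ lies on the negative-$x$ side of $\ell_2$.

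Next I would check that $e>1$, so that the conic is a hyperbola $\H'$. Since the pair is nested, $R>d+r$; a one-line estimate then gives $d+\ell_2=(k'+\Delta)/(2d)>R$ (with $k'=R^2+d^2-r^2$), i.e.\ $\ell_2$ lies outside $\C_{ext}$. From the focal form, the two transverse vertices of $\H'$ sit at $\ell_2-\frac{\rho^2/R}{1+e}$ and $\ell_2-\frac{\rho^2/R}{e-1}$, whence $a_h'=\frac{\rho^2/R}{e^2-1}=\frac{\rho^2R}{(d+\ell_2)^2-R^2}$, the centre of $\H'$ is at $\ell_2-a_h'e$, and $b_h'^2=a_h'^2(e^2-1)=\frac{\rho^4}{(d+\ell_2)^2-R^2}$, $c_h'=a_h'e$.

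The remaining work — the main obstacle, though it is bookkeeping rather than anything conceptual — is the algebraic reduction to the stated closed forms. It turns on the single identity
\[
\Delta^2=\bigl((R+d)^2-r^2\bigr)\bigl((R-d)^2-r^2\bigr)=k'^2-4R^2d^2,
\]
obtained just by regrouping the four linear factors of $\Delta^2$. Using it, $(d+\ell_2)^2-R^2$ collapses to $\Delta(k'+\Delta)/(2d^2)$, and substituting into the formulas for $a_h'$ and $b_h'^2$ above produces exactly the claimed expressions — with $b_h'^2>0$ because $k'-\Delta=4R^2d^2/(k'+\Delta)>0$. A final one-line check gives $a_h'^2+b_h'^2=\rho^4d^2/\Delta^2$, so $c_h'=\rho^2d/\Delta=c_h$, and the centre of $\H'$ works out to $(\ell_2-c_h',0)=\O_h$. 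Hence $\H'$ shares its centre, its transverse axis and its value of $c$ with $\H$, so the two hyperbolas have the same pair of foci: $\H'$ is confocal with $\H$.

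Finally I would record the quick consistency check that formally substituting $\Delta\mapsto-\Delta$ in the lemma for the polar image of $\C_{ext}$ with respect to $\ell_1$ swaps $\ell_1\leftrightarrow\ell_2$ and reproduces the formulas above, with the expected sign change in the role of $b^2$ (the ellipse identity $c^2=a^2-b^2$ becoming $c^2=a^2+b^2$). The only pitfall I anticipate is keeping orientations along the axis, the sign of $e-1$, and the branch of $\Delta$ straight, and applying the factorization $\Delta^2=k'^2-4R^2d^2$ \emph{before} expanding $\Delta^2$ into monomials, which would otherwise bury all the cancellations.
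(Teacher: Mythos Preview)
Your argument is correct. The paper itself states this lemma in Appendix~B without proof; it is one of four parallel computational lemmas listed there, all presented as facts (presumably verified by CAS). So there is no ``paper's own proof'' to compare against, and your write-up supplies exactly the kind of justification the appendix omits.

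A few remarks on the details. Your derivation of the focal polar form is in fact the direct computation of the pole of each tangent line of $\C_{ext}$ with respect to the inversion circle centred at $\ell_2$; invoking Lemma~\ref{lem:polar} ``in reverse'' is unnecessary, since you never actually use it. The key algebraic step---rewriting $\Delta^2=k'^2-4R^2d^2$ and hence factoring $(k'+\Delta)^2-4R^2d^2=(k'+\Delta)\bigl((k'+\Delta)-(k'-\Delta)\bigr)=2\Delta(k'+\Delta)$---is exactly what makes $(d+\ell_2)^2-R^2=\Delta(k'+\Delta)/(2d^2)$ drop out cleanly, and from there $b_h'^2$, $c_h'=\rho^2 d/\Delta$, and $a_h'^2=c_h'^2-b_h'^2$ all follow as you indicate. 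The confocality check via $c_h'=c_h$ and the coincidence of centres at $\O_h$ is the right way to finish.

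Your closing observation that the formal substitution $\Delta\mapsto-\Delta$ interchanges the $\ell_1$- and $\ell_2$-lemmas is a nice structural remark that the paper does not make explicit, and it explains at a glance why the four lemmas in Appendix~B come in two mirror pairs.
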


\section{Polar Image of Confocal Pair}
\label{app:confocal-to-bicentric}
Consider a pair of origin-centered confocal ellipses $\E$ and $\E'$ with semi-axes $a,b$ and $a',b'$, respectively. Their common foci $f_1,f_2$ lie at:

\[ f_1=[-c,0],\;\;\;f_2=[c,0] \]
where $c^2=a^2-b^2$.

Below we show that the polar image of the $\E,\E'$ pair with respect to a circle of radius $\rho$ centered on $f_1$ is a pair of nested circles $\C_{int},\C_{ext}$ with centers given by:

\[\O_{int}=[-c-\rho^2\frac{c}{b^2},0],\;\;\;\O_{ext}=[-c-\rho^2\frac{c}{b'^2},0]\]
Note the distance $d$ between said centers is given by: 

\[ d=\rho^2\frac{ c\, (a^2 - {a'}^2)}{b^2\, {b'}^2}=\rho^2  \frac{ca^2J^2}{b'^2}\]
where 
$  J = \sqrt{a^2 - a'^2}/(ab).
$

Their respective radii $r,R$ are given by: 

\[ r=\rho^2\frac{a}{b^2},\;\;\;R=\rho^2\frac{a'}{b'^2}\]


 Let $\ell_1$ (resp. $\ell_2$) be the limiting point internal (resp. external) to $\C_{int},\C_{ext}$.
 
 \begin{lemma}
 The limiting points $\ell_1,\ell_2$ are given by: $[-c,0]$ and $[-c+\frac{\rho^2}{c},0]$.
 \end{lemma}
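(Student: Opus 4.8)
The plan is to avoid re-deriving limiting points from scratch and instead bootstrap off the structural results already in hand. First I would observe that the pair $\C_{int},\C_{ext}$ is, by construction, exactly the polar image of the confocal ellipse pair $\E,\E'$ with respect to a circle of radius $\rho$ centered at $f_1=[-c,0]$; hence Lemma~\ref{lem:limit-focus} (applied with $\E_1=\E$, $\E_2=\E'$, so that $\E_1^\star,\E_2^\star$ are $\C_{int},\C_{ext}$) already tells us that $f_1$ is one of the two limiting points of the pencil generated by $\C_{int}$ and $\C_{ext}$. A quick check that $f_1$ is interior to both circles — the distance from $f_1$ to $\O_{int}$ is $\rho^2 c/b^2$, which is less than $r=\rho^2 a/b^2$ because $c<a$, and similarly $\rho^2 c/b'^2 < R=\rho^2 a'/b'^2$ because $c<a'$ — identifies it as the internal limiting point, so $\ell_1=f_1=[-c,0]$.

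For $\ell_2$ I would invoke the classical property of a pencil of circles that its two limiting points are inverse to one another with respect to every circle of the pencil (equivalently: inverting in a circle centered at one limiting point carries the pencil to a family of circles concentric about the image of the other). Thus $\ell_2$ is simply the inverse of $\ell_1$ in $\C_{int}$. Working with signed coordinates along the $x$-axis, $\ell_1-\O_{int}=\rho^2 c/b^2$, so the inversion identity $(\ell_1-\O_{int})(\ell_2-\O_{int})=r^2$ gives $\ell_2-\O_{int}=\rho^2 a^2/(c\,b^2)$ and therefore
\[
\ell_2=-c-\frac{\rho^2 c}{b^2}+\frac{\rho^2 a^2}{c\,b^2}=-c+\frac{\rho^2\,(a^2-c^2)}{c\,b^2}=-c+\frac{\rho^2}{c},
\]
using the ellipse relation $b^2=a^2-c^2$. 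As a consistency check I would redo the computation with $\C_{ext}$ in place of $\C_{int}$: there $\ell_1-\O_{ext}=\rho^2 c/b'^2$ and $R^2=\rho^4 a'^2/b'^4$, which yields the same value $-c+\rho^2/c$ once one uses $b'^2=a'^2-c^2$ — and this is precisely where confocality (a common $c$) enters.

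I expect no genuine obstacle here; the statement is essentially a bookkeeping corollary of Lemmas~\ref{lem:polar}–\ref{lem:limit-focus}. The only points demanding care are matching the labels $\E,\E'$ to $\C_{int},\C_{ext}$ correctly when quoting Lemma~\ref{lem:limit-focus}, keeping the signs straight in the one-dimensional inversion along the axis of symmetry, and remembering to substitute the confocal relations $b^2=a^2-c^2$ and $b'^2=a'^2-c^2$ at the last step. A reader preferring a self-contained argument can instead substitute the two candidate abscissas $-c$ and $-c+\rho^2/c$ directly into the condition that a point-circle belong to the pencil spanned by $\C_{int}$ and $\C_{ext}$ (solve the quadratic in the pencil parameter for which the radius vanishes); this is routine but longer, and the inversion shortcut renders it unnecessary.
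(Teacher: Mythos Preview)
Your proposal is correct. The paper actually states this lemma without proof in Appendix~\ref{app:confocal-to-bicentric}, so there is no argument to compare against; your reasoning supplies the missing justification. The approach you take---invoking Lemma~\ref{lem:limit-focus} to identify $f_1=[-c,0]$ as one limiting point, checking it is interior to both circles, and then recovering $\ell_2$ via the classical fact that the two limiting points of a non-intersecting coaxial pencil are mutually inverse with respect to every circle of the pencil---is clean and efficient. The consistency check using $\C_{ext}$ in place of $\C_{int}$ is a nice touch that makes transparent where confocality (the shared value of $c$, via $b^2=a^2-c^2$ and $b'^2=a'^2-c^2$) actually enters.
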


\section{Bicentric Vertices: N=3,4}
\label{app:bicentric-vertices-n34}
Consider a pair of circles
\[\mathcal{C}_1: x^2+y^2-r^2=0, \;\;\; \mathcal{C}_2: (x+d)^2+y^2-R^2=0\]

\subsection{N=3}

Let $(x_0,y_0)=(r\cos t,r\sin t) \in \mathcal{C}_1$. Let
$d^2=R(R-2r)$. Then the 3-periodic orbit is parametrized by
$\{P_1,P_2,P_3 \}, $ where
{\small  
\begin{align*}
P_1&=\left[{\frac {\cos t(2\,d R    \cos  t    +    {R}^{2}-  {d}^{2})+\Delta\,\sin t}{2\,R}}-d, {\frac {\sin t( 2d R\, \cos
 t  +  {R}^{2}-d^2) -\Delta\,\cos t }{2\,R}}\right]
\\
P_2&=\left[ {\frac { \cos t(2\,d R  \cos t     +{R}^{2}- {d}^{2})-\Delta\,\sin
 t }{2\,R}}-d,{\frac {\sin t(2d R\,  \cos
 t  +   {R}^{2}-  {d}^{2})+\Delta\,\cos t }{2\,R}}\right]
\\
 P_3&=\left[-{\frac { \left( R\cos t   -d \right)  \left( {R}^{2}-{d
}^{2} \right) }{     R^2+d^2-2\,d R\cos t }   }, {\frac 
{ -R\left(   R^2-d^2 \right) \sin t }{ R^2+d^2-2\,d R \cos
 t  }}\right]\\
 \Delta&=\sqrt { \left(  R^2+d^2-2d R\,\cos t   \right) 
 \left(   3\, R^2-d^2 +2\,d R\cos t\right) }
 \end{align*}
 }
Under the above pair of circles, the limiting points are at:

\begin{align*}
    l_1&=\left[\frac{  R^2-d^2 }{8\,d {R}^{2} }
 \left( \sqrt {   (9\,R^2-d^2)    
  (R^2-d^2)     }+3\, R^2+d^2 \right) 
,0\right]\\
l_2&=l_1-\left[\frac{\sqrt{9R^2 - d^2} (R^2-d^2)^{\frac{3}{2}}}{4R^2d}, 0\right].
\end{align*}
 
\subsection{N=4}

Let $(x_0,y_0)\in \mathcal{C}_1$. 
The Cayley condition for a pair of circles to admit Poncelet 4-periodics due to Kerawala is \cite[Poncelet's Porism, Eq. 39]{mw}:

\[ \frac{1}{(R-d)^2}+ \frac{1}{(R+d)^2}-\frac{1}{r^2}=0 \]
  
Let $P_i=[x_i,y_i]$, $i=1,...,4$ denote the vertices of a bicentric 4-periodic. Let $\alpha=R^2+d^2$ and $\beta=R^2-d^2$. The vertices are parametrized as:
 
\begin{align*}
\small
   x_{1}&=\Delta\,y_0-{\frac { \left( \beta+2\,d x_0
 \right)  \left( d \alpha  -\beta x_0 \right) }{2\,\alpha}},\\
 y_{1}&=-\Delta\,x_0+{\frac { \left( 2\,d \beta x_0+
 \alpha ^{2} \right) y_0}{2\,\alpha}}\\
x_{2}&=-\Delta\,y_0-{\frac { \left( \beta+2\,dx_0
 \right)  \left( d \beta - \beta x_0 \right) }{2\,\alpha}}\\
y_{2}&=\Delta\,x_0+{\frac {2\,d \beta y_0\,x_0+
 \alpha ^{2}y_0}{2\,\alpha}}\\
 x_3&= \frac{  ((( x_0^2 \alpha \beta - 4  x_0^2 \alpha^2 + 3/2 \beta^3 - 2 \beta^2 \alpha) \sqrt{2 \alpha - 2 \beta} + \beta (2 \Delta \alpha  y_0 + 8 \alpha^2  x_0 - 8 \alpha \beta  x_0 + \beta^2  x_0)) \beta)}{(4 (\sqrt{2 \alpha - 2 \beta} \alpha  x_0 + \beta (\beta - 2 \alpha)/2)^2)}\\
 y_3&=\frac{\alpha \beta (\alpha (2  x_0  y_0 \alpha + \beta ( x_0  y_0 - 2 \Delta)) \sqrt{2 \alpha - 2 \beta} + (4 \Delta  x_0 - 2 \beta  y_0) \alpha^2 - 2 \Delta \alpha \beta  x_0 + \beta^3  y_0)}{(2 \sqrt{2 \alpha - 2 \beta} \alpha  x_0 - 2 \alpha \beta + \beta^2)^2}\\
 x_4&=-\frac{((( x_0^2 \alpha \beta - 4  x_0^2 \alpha^2 + 3/2 \beta^3 - 2 \beta^2 \alpha) \sqrt{2 \alpha - 2 \beta} + \beta (-2 \Delta \alpha  y_0 + 8 \alpha^2  x_0 - 8 \alpha \beta  x_0 + \beta^2  x_0)) \beta)}{(4 (\sqrt{2 \alpha - 2 \beta} \alpha  x_0 + \beta (\beta - 2 \alpha)/2)^2)}\\
 y_4&=\frac{(\alpha (2  x_0  y_0 \alpha + \beta ( x_0  y_0 + 2 \Delta)) \sqrt{2 \alpha - 2 \beta} + (-4 \Delta  x_0 - 2 \beta  y_0) \alpha^2 + 2 \Delta \alpha \beta  x_0 + \beta^3  y_0) \beta}{(2 \sqrt{2 \alpha - 2 \beta} \alpha  x_0 - 2 \alpha \beta + \beta^2)^2}\\
 \Delta&=\frac{\sqrt{-2\sqrt{2\alpha - 2\beta}\; \alpha x_0\beta^2 - 2 x_0^2\alpha^3 + 2 x_0^2\alpha^2\beta + \alpha^2\beta^2 + \alpha\beta^3 - \beta^4}}{ (2\alpha}
\end{align*}

Under the above pair of circles, the limiting points are at:
\begin{align*}
    l_1=\left[\frac{R^2 - d^2}{2d}, 0\right],\;\;\; l_2=  \left[\frac{d(R^2-d^2)}{R^2 + d^2}, 0\right]
\end{align*}

\section{Limiting Pedal Perimeters for N=3 and N=4}
\label{app:pedal-perimeters-n34}
Below we consider 3- and 4-periodics in the confocal pair where $a,b$ are the semi-axes of the outer ellipse has axes $(a,b)$. Below, set $\delta=\sqrt{a^4-a^2 b^2+b^4}$ and $c^2=a^2-b^2$.

\subsection{N=3 case}

\begin{figure}
    \centering
    \includegraphics[width=.9\textwidth]{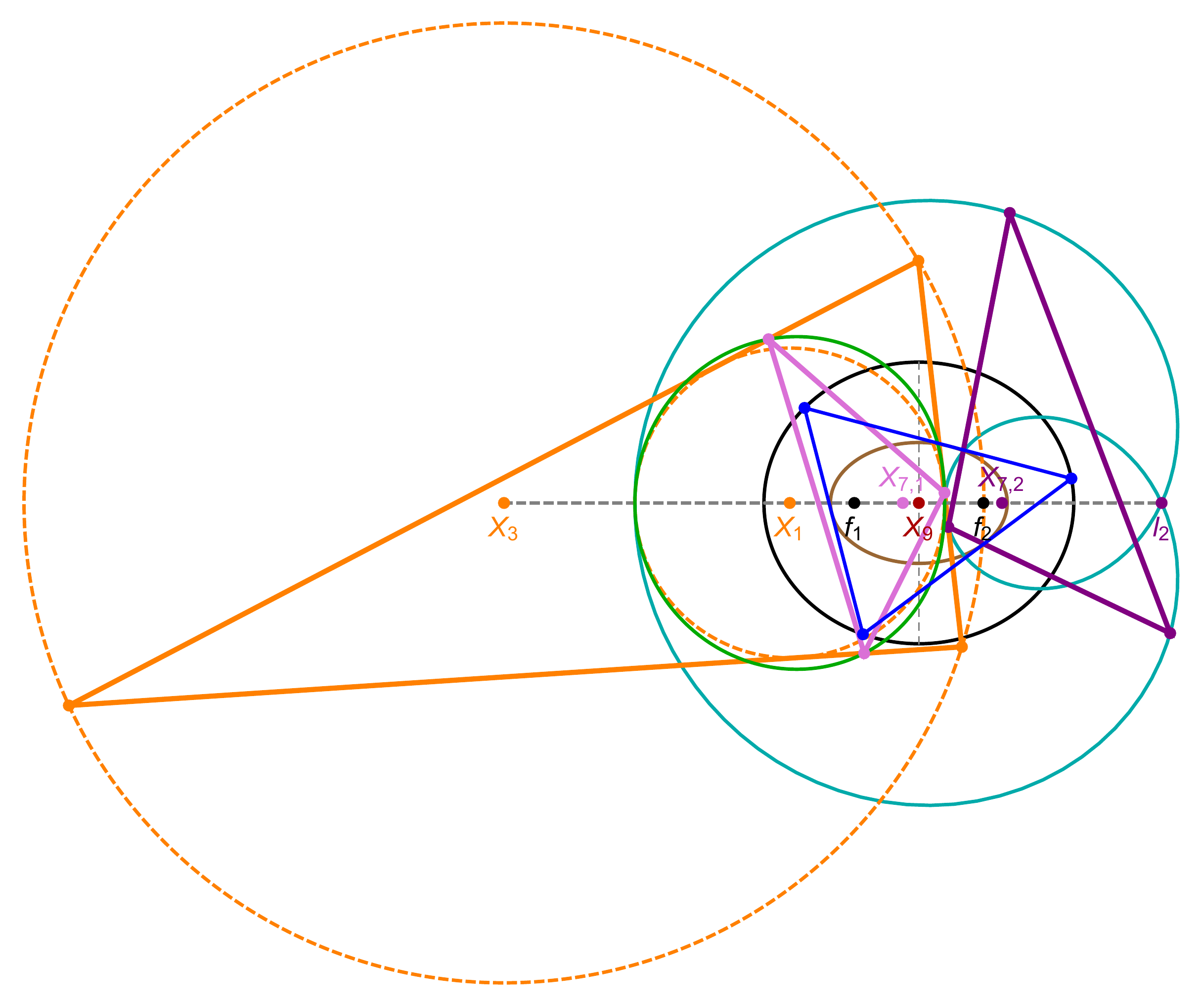}
    \caption{N=3 case: the bicentric family (solid orange) is the poristic family \cite{gallatly1914-geometry}. Its sum of cosines is invariant and equal to those of the two limit point pedals (pink and purple). The Gergonne points $X_{7,1}$ and $X_{7,2}$ of each pedal are stationary. \href{https://bit.ly/379HU8l}{live}}
    \label{fig:n3}
\end{figure}

Referring to Figure~\ref{fig:n3}, the perimeter $L^\dagger$ of the inversive polygon for the $N=3$ family, originally derived in  \cite[Prop. 4]{reznik2020-n3-focus-inversive} is given by:

\[L^\dagger=L_+=\rho^2 \frac {\sqrt { \left( 8\,{a}^{4}+4\,{a}^{2}{b}^{2}+2\,{b}^{4}
 \right) \delta+8\,{a}^{6}+3\,{a}^{2}{b}^{4}+2\,{b}^{6}}}{{a}^{2}{b}^{
2}}\]

By Corollary~\ref{cor:inv-per}, this is equal to the perimeter $L_-$ of the bicentric pedal with respect to the focal limiting point.

\begin{align*}
    L_{-} &= {\frac {  \left( 9\,{R}^{2}-{d}^{2} \right)  \left( {R}^{2}-{d}^{2}
 \right) \sqrt {2}\rho^2}{16\,{R}^{4}d}\sqrt {- \left( {R}^{2}-{d}^{2}
 \right) ^{{\frac{3}{2}}}\sqrt {9\,{R}^{2}-{d}^{2}}+ 3R^4 + 6R^2d^2 - d^4}}\\
R&=(2a^4 - 2a^2b^2 + b^4 + (2a^2 -b^2)\delta  )a\rho^2/b^6,\;\;
d=(2a^2 - b^2 + 2\delta)c\rho^2a^2/b^6
\end{align*}

The perimeter $L_+$ of the bicentric pair with respect to the non-focal limiting point is given by:

\begin{align*}
    L_{+} &= {\frac { \left( 9\,{R}^{2}-{d}^{2} \right)  \left( {R}^{2}-{d}^{2}
 \right) \sqrt {2}\rho^2}{16\,{R}^{4}d}\sqrt { \left( {R}^{2}-{d}^{2}
 \right) ^{{\frac{3}{2}}}\sqrt {9\,{R}^{2}-{d}^{2}}+ 3R^4 + 6R^2d^2 - d^4}}\\
R&=(2a^4 - 2a^2b^2 + b^4 + (2a^2 -b^2)\delta  )a\rho^2/b^6,\;\;
d=(2a^2 - b^2 + 2\delta)c\rho^2a^2/b^6
\end{align*}

The sum of cosines of a triangle is given by $1+r/R$ and is therefore constant for the $N=3$ bicentric family. Let $\theta'_i$ denote the angles of the bicentric polygon. The sum of its cosines can be derived as:

\begin{equation}
\sum\cos\theta' = 1+\frac{r}{R} = \frac{3R^2 - d^2}{2R^2}
\label{eq:bic-cos}
\end{equation}

\begin{proposition}
The sum of cosines for the first and second $N=3$ bicentric pedals are constant and identical to \eqref{eq:bic-cos}.
\end{proposition}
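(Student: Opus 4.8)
The plan is to mirror the proofs of \cref{thm:bicentric-sum,thm:bicentric-pedal-perimeter}: write the pedal triangle's sum of cosines as $1$ plus a constant multiple of an elliptic function of the Jacobi parameter $u$, show that function has no poles, and finish by Liouville. The reduction to a \emph{single} elliptic function is exactly where $N=3$ (i.e.\ that we deal with triangles) enters, and the argument runs uniformly for both limiting points via the $\pm$ notation.

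I would begin with two elementary ingredients. First, for any triangle with angles $\alpha_1,\alpha_2,\alpha_3$, sides $s_1,s_2,s_3$, semiperimeter $s$, area $A_\perp$, inradius $r_\perp$ and circumradius $R_\perp$,
\[
\sum_j \cos\alpha_j \;=\; 1+\frac{r_\perp}{R_\perp}\;=\;1+\frac{4A_\perp^{\,2}}{s\,s_1 s_2 s_3}.
\]
Second, the classical pedal-area formula \cite[pp.~135--141]{johnson1960}: a triangle inscribed in a circle of radius $R$ centred at $O$ has pedal triangle, with respect to a point $\ell$, of area $A_\perp=\bigl|R^2-|O\ell|^2\bigr|\,A/(4R^2)$, where $A$ is the triangle's area. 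For $\P(u)$ we take $O=$ centre of $\C_R$ and $\ell$ a limiting point, so $|O\ell|=|\delta_\pm|$ is fixed over the family; and, crucially for triangles, $A=\tfrac1{4R}\,r_1 r_2 r_3$ with $r_j=|p_{j-1}-p_{j+1}|=2R\sin(\phi_{j+1}-\phi_{j-1})$. Feeding in $\rho_j=\tfrac2k\sqrt{-\delta_\pm R}\,dn(u_j)$ and $s_j=\tfrac{r_j\rho_j}{2R}$ from \cref{lem:pedal-perimeter}, together with the constancy of $s=\tfrac12 L_\pm$ from \cref{thm:bicentric-pedal-perimeter}, every elementary factor collapses into a single constant $\kappa$ and one is left with
\[
\sum_j\cos\alpha_j \;=\; 1+\kappa\;\frac{\displaystyle\prod_{j=1}^{3}\bigl(sn(u_{j+1})cn(u_{j-1})-sn(u_{j-1})cn(u_{j+1})\bigr)}{\displaystyle\prod_{j=1}^{3}dn(u_j)},\qquad u_j=u+j\sigma .
\]

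It then remains to show the displayed ratio $F(u)$ is constant. As in the earlier proofs $F$ is doubly periodic, so by Liouville it suffices to show it has no poles. The numerator is a polynomial in $sn,cn$ of the $u_l$, hence its only possible poles are the common poles $z_p=2mK+i(2n+1)K'$ of $sn$ and $cn$; if $u_j=z_p$ then \eqref{eqn:zpole} gives $sn(u_{j-1})=-sn(u_{j+1})$ and $cn(u_{j-1})=-cn(u_{j+1})$, so the $j$-th factor of the numerator has a simple zero, while the two factors indexed $j\pm1$ and the factor $dn(u_j)$ of the denominator each have a simple pole — the orders balance and $u_p$ is not a pole of $F$. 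The one new point is a \emph{zero} of the denominator, $dn(u_j)=0$, occurring at $u_j=(2m+1)K+i(2n+1)K'$. Here I would use the companion symmetry, parallel to \eqref{eqn:zpole} and immediate from the quarter-period shifts $sn(u+K+iK')=\tfrac{dn\,u}{k\,cn\,u}$, $cn(u+K+iK')=\tfrac{-ik'}{k\,cn\,u}$, $dn(u+K+iK')=\tfrac{ik'\,sn\,u}{cn\,u}$: about a zero $z_0$ of $dn$ the functions $sn$ and $cn$ are \emph{even}, $sn(z_0+w)=sn(z_0-w)$, $cn(z_0+w)=cn(z_0-w)$. Applied with $w=\sigma$ and $z_0=u_j$ this yields $sn(u_{j-1})=sn(u_{j+1})$, $cn(u_{j-1})=cn(u_{j+1})$, so the $j$-th numerator factor again has a simple zero, which cancels the simple zero of $dn(u_j)$. (One also checks the three $dn(u_j)$ have no common zero, so no higher-order denominator zeros occur.) Thus $F$ is entire and periodic, hence constant.

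Finally, to identify the constant value I would evaluate $\sum_j\cos\alpha_j$ at one convenient member of the family — the axis-symmetric bicentric triangle, whose pedal with respect to a limiting point is isosceles — using the explicit $N=3$ vertex and limiting-point parametrizations of \cref{app:bicentric-vertices-n34} and Euler's relation $d^2=R(R-2r)$; this is a short elementary computation and returns the value $(3R^2-d^2)/(2R^2)$ of \eqref{eq:bic-cos}. Equivalently, the whole Proposition can be checked mechanically by substituting the parametrizations of \cref{app:bicentric-vertices-n34} into the law of cosines for the two pedal triangles; the Liouville argument is what explains why the result is parameter-independent, and why the corresponding statement genuinely changes at $N=4$ — there the step $A=\tfrac1{4R}\,r_1 r_2 r_3$, and with it the whole reduction, is no longer available. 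I expect the only real subtlety to be the order bookkeeping at the zeros of $dn$: correctly invoking the ``even about a zero of $dn$'' symmetry and ruling out accidental coincidences among the three $dn(u_j)$.
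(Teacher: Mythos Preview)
Your argument is correct, and it is a genuinely different route from the paper's. The paper's proof of this Proposition is a one-line appeal to computer algebra: it substitutes the explicit $N=3$ vertex parametrization of \cref{app:bicentric-vertices-n34} into the law of cosines and lets a CAS verify that the resulting expression in $t$ collapses to the constant $(3R^2-d^2)/(2R^2)$ for each limiting point. No elliptic functions, no Liouville.

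Your approach instead lifts the problem back into Jacobi's parametrization and reuses the machinery of \cref{thm:bicentric-sum,thm:bicentric-pedal-perimeter}. The key reduction --- writing $\sum\cos\alpha_j=1+4A_\perp^2/(s\,s_1s_2s_3)$, then feeding in $A_\perp=|R^2-\delta_\pm^2|A/(4R^2)$, $A=r_1r_2r_3/(4R)$, $s_j=r_j\rho_j/(2R)$, $\rho_j=\tfrac{2}{k}\sqrt{-\delta_\pm R}\,dn(u_j)$, and the constancy of $L_\pm$ --- does collapse cleanly to a constant times $\prod_j\bigl(sn(u_{j+1})cn(u_{j-1})-sn(u_{j-1})cn(u_{j+1})\bigr)\big/\prod_j dn(u_j)$. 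Your pole bookkeeping is right: at a pole $u_j=z_p$ the $j$th numerator factor has a simple zero (by \eqref{eqn:zpole}) while the other two factors and $dn(u_j)$ each have simple poles, and the orders cancel; at a zero $z_0$ of $dn$ the evenness $sn(z_0+w)=sn(z_0-w)$, $cn(z_0+w)=cn(z_0-w)$ (immediate from the quarter-period shifts you quote) forces a simple zero in the $j$th numerator factor that cancels the simple zero of $dn(u_j)$. Double periodicity of $F$ follows since under $u\mapsto u+2iK'$ numerator and denominator each pick up a factor $(-1)^3$.

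What each buys: the CAS route is shorter and simultaneously pins down the constant value, but is opaque. Your route explains \emph{why} the sum is invariant, ties it to the two main theorems, and makes transparent where $N=3$ enters (the identity $A=r_1r_2r_3/(4R)$ has no $N\geq4$ analogue, so the reduction to a single elliptic ratio collapses exactly there --- which is consistent with \cref{conj:limiting-sum-cosines}). The only residual work in your plan is the evaluation at the isosceles configuration to identify the constant with $(3R^2-d^2)/(2R^2)$; that really is an elementary computation, and of course it is exactly the CAS step the paper performs globally.
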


Note: in terms of the associated elliptic billiard parameters, this is given by \cite[Prop. 6]{reznik2020-n3-focus-inversive}:

\[\sum\cos{\theta^\dagger}_{(N=3)}=\frac{\delta (a^2+c^2-\delta)}{a^2c^2} \]

\begin{proof} Using CAS, it follows from  straightforward calculations with the orbit parametrized in Appendix \ref{app:bicentric-vertices-n34}.
\end{proof}

The two limiting pedals have stationary Gergonne points $X_7$. The first one was derived in \cite[Proposition 1]{garcia2020-self-intersected}:

\[ X_{7,1}=\left[c\left(1-\frac{\rho^2}{\delta+c^2}\right),0\right]  \]

\[X_{7,1} =\frac{ (R^2 - d^2)((R^2 - d^2)^{3/2}\sqrt{9R^2 - d^2} + 3R^4 + 6R^2d^2 - d^4)}{16R^4d}\]

\subsection{N=4 case}

\begin{figure}
    \centering
    \includegraphics[width=.9\textwidth]{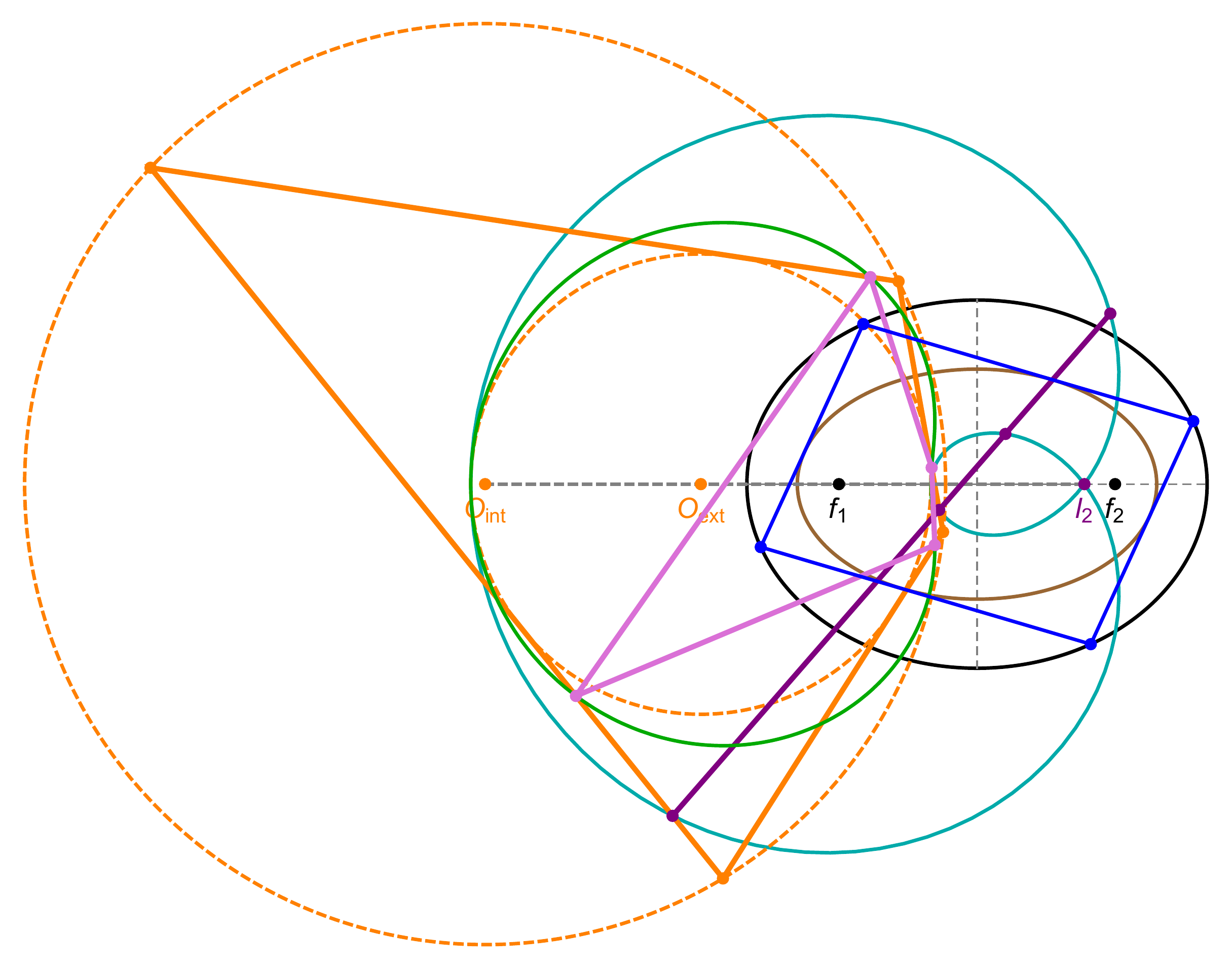}
    \caption{In the N=4 case, remarkable things happen: (i) the sum of cosines of the $f_1$-pedal (pink) is not constant; (iii) its perimeter is the same as the corresponding billiard 4-periodic; (iv) the vertices of the $l_2$-pedal (purple) are collinear and (v) the sum of its cosines is 4. \href{https://youtu.be/fZe6elRTfeA}{Video}}
    \label{fig:n4}
\end{figure}

Referring to Figure~\ref{fig:n4}, the perimeter $L^\dagger$ of the inversive polygon for billiard 4-periodics was originally derived in  \cite[Prop. 18]{garcia2020-self-intersected}. It is identical to the perimeter of 4-periodics themselves and given by:

\begin{equation}
    L^\dagger= L_{+,N=4}= \,{\frac {4\sqrt {{a}^{2}+{b}^{2}}}{{b}^{2}}}
\label{eqn:inv-per-n4} 
\end{equation}

\begin{proposition}
In the $N=4$ family, the vertices of the bicentric pedal with respect to the non-focal limiting point are collinear.
\end{proposition}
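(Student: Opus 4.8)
The plan is to recognize the feet of the perpendiculars as a \emph{Simson-type line of a complete quadrilateral}. The four extended sidelines $L_1,L_2,L_3,L_4$ of a bicentric $4$-periodic form a complete quadrilateral; let $M$ be its Miquel point, i.e. the common point of the circumcircles of the four triangles $T_{ijk}$ bounded by $\{L_i,L_j,L_k\}$. For each such triangle the three perpendicular feet from $M$ onto $L_i,L_j,L_k$ lie on one line (the Simson line of $M$ for $T_{ijk}$), and since any two of these triangles share two of the four sidelines, all four Simson lines coincide; hence the four feet of the perpendiculars dropped from $M$ onto $L_1,\dots,L_4$ are collinear. Conversely, if the feet from a point $p$ are collinear, applying the converse of Simson's theorem to $T_{123}$ and $T_{124}$ forces $p$ onto both of their circumcircles, so $p=M$ (for $p$ away from the quadrilateral's vertices, which holds generically over the family). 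Therefore the proposition is \emph{equivalent} to the single assertion that the non-focal limiting point $\ell_2$ is the Miquel point of the sidelines of every member of the bicentric $4$-periodic family. Equivalently, via the inversive picture of \cref{app:polar-pedal}, it is the assertion that the $\ell_2$-polar image (a hyperbolic billiard $4$-periodic) is concyclic with the focus $\ell_2$.

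The cleanest route to $\ell_2=M$ splits into two steps. First, show $M$ lies on the axis $OI$ joining circumcenter $O$ and incenter $I$. For a cyclic quadrilateral $p_1p_2p_3p_4$ with circumcircle $\C_R$ and diagonal point $X=p_1p_3\cap p_2p_4$, the Miquel point of the sidelines is the inverse of $X$ in $\C_R$ — equivalently, the foot of the perpendicular from $O$ to the polar line of $X$, which is the line $EF$ with $E=L_1\cap L_3$, $F=L_2\cap L_4$; in particular $M$ lies on line $OX$. When the quadrilateral is also tangential with incircle $\C_r$ centered at $I$, a classical fact (a consequence of Brianchon's theorem applied to the contact quadrilateral) is that $X$ lies on $OI$, so line $OX$ coincides with $OI$ and $M\in OI$, which in the coordinates of \cref{app:bicentric-vertices-n34} is the $x$-axis — exactly where $\ell_2$ lives by \eqref{eqn:limiting-point}. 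Second, match the abscissa: compute $|OX|$ from the explicit vertices $P_1,\dots,P_4$ of \cref{app:bicentric-vertices-n34} (a rational function of $R,d$ and the parameter), form $M$ as the $\C_R$-inverse of $X$, and verify it equals $\ell_2=[\,d(R^2-d^2)/(R^2+d^2),0\,]$ after imposing the Kerawala relation $\tfrac1{(R-d)^2}+\tfrac1{(R+d)^2}=\tfrac1{r^2}$.

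I expect the identification $\ell_2=M$ to be the main obstacle: one must either cite or re-derive the two synthetic inputs (the Miquel point of a cyclic quadrilateral is the $\C_R$-inverse of its diagonal point; the diagonal point of a bicentric quadrilateral lies on $OI$), and then run the short algebra that pins the abscissa. A fully safe fallback, in the spirit of the neighbouring propositions, bypasses the synthetic picture: substitute $P_1,\dots,P_4$ and $\ell_2$ from \cref{app:bicentric-vertices-n34} into a CAS, form the four perpendicular feet $q_j$ onto $P_jP_{j+1}$, and check that the two determinants $\det[\,q_1-q_2,\ q_3-q_2\,]$ and $\det[\,q_1-q_2,\ q_4-q_2\,]$ vanish identically once the Kerawala relation is imposed; this closes the argument at the cost of opacity. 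As a consistency check one can also confirm the companion claim of \cref{fig:n4} that this degenerate pedal has sum of cosines equal to $4$, which reads off from the cyclic order of the collinear $q_j$.
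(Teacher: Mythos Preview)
Your plan is sound and the key equivalence is correct: the pedal feet from a point $p$ onto the four sidelines of a complete quadrilateral are collinear precisely when $p$ is the Miquel point, so the proposition reduces to $\ell_2=M$. The two synthetic inputs you invoke (the Miquel point of a cyclic quadrilateral is the $\C_R$-inverse of the diagonal point $X$, and $X\in OI$ for a bicentric quadrilateral) are both classical and true, so the only remaining work is the abscissa computation, which your CAS fallback certainly handles. One point worth making explicit: your plan tacitly requires that $X$ (hence $M$) is a \emph{fixed} point over the whole Poncelet family, not merely on the axis; this does hold for bicentric $4$-gons and will emerge from the computation, but it is the real content of the ``short algebra'' step.

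The paper's argument is shorter and takes exactly the route you flag as ``equivalently, via the inversive picture'' and then set aside. It uses the already-established polar/pedal correspondence (\cref{app:polar-pedal}): the $\ell_2$-polar image of the bicentric family is a confocal pair of hyperbolas with $\ell_2$ as one focus, and the $\ell_2$-pedal of the bicentric polygon equals the $\ell_2$-inversion of the hyperbolic billiard polygon. The one geometric input is that hyperbolic billiard $4$-periodics are concyclic with both foci; since that circle passes through the inversion center $\ell_2$, its image is a line, and collinearity follows immediately. So where you import Miquel/Simson machinery plus a coordinate check, the paper stays entirely inside the polar--inversive framework it has already built; your approach, on the other hand, names the pedal-collinearity phenomenon for what it is (a Simson line of the complete quadrilateral) and identifies $\ell_2$ as a distinguished classical point, which is conceptually satisfying even if less economical here.
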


\begin{proof}
The polar image of the bicentric family with respect to $\ell_2$ is a pair of confocal hyperbolas, see \cref{app:bicentric-to-confocal}, i.e., the polar image of bicentric 4-periodics is a billiard family. It can be shown its vertices are concyclic with the two hyperbolic foci $f_1',f_2'$, one of which coincides with $\ell_2$. Therefore, the inversion of said vertices with respect to $\ell_2$ is a set of collinear points.
\end{proof}

As before, Equation~\ref{eqn:inv-per-n4} is the same as the perimeter of the first bicentric pedal. The perimeter $L_+$ of the non-focal bicentric pedal is given by:

\[ L_{-,N=4} = \frac{4a^2}{b^2 c} \]

Regarding the sum of cosines, it is well-known a circle-inscribed quadrilateral has supplementary opposing angles, i.e.:

\begin{observation}
The sum of cosines of a bicentric N=4 family is null.
\end{observation}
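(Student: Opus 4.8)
The plan is to bypass the Jacobi parametrization altogether and argue synthetically. Every member of the bicentric $N=4$ family is, by construction, inscribed in the fixed outer circle $\C_R$; that is, it is a cyclic quadrilateral. So the first — and essentially only — step is to invoke the supplementary-angle property of cyclic quadrilaterals: writing $\theta_1,\theta_2,\theta_3,\theta_4$ for the internal angles taken in cyclic order, one has $\theta_1+\theta_3=\pi$ and $\theta_2+\theta_4=\pi$. Then $\cos\theta_1+\cos\theta_3=\cos\theta_1+\cos(\pi-\theta_1)=0$, and likewise $\cos\theta_2+\cos\theta_4=0$, so $\sum_{j=1}^{4}\cos\theta_j=0$ for every value of the Jacobi parameter $u$, which is the claim.

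For completeness I would spell out the supplementary-angle fact in the notation of \eqref{jacobivertex}: with $p_j=R[\cos 2\phi_j,\sin 2\phi_j]$, the internal angle at $p_j$ subtends the chord $p_{j-1}p_{j+1}$, so by the inscribed angle theorem it equals one half of the arc $p_{j-1}p_{j+1}$ that does not contain $p_j$; for an opposite pair of vertices ($p_1,p_3$ or $p_2,p_4$) the two such arcs together sweep the whole circle, which gives $\theta_1+\theta_3=\theta_2+\theta_4=\pi$. This is consistent with \cref{thm:bicentric-sum}, which already guarantees that $S(u)=\sum\cos\theta_j$ is independent of $u$; the present Observation merely pins down the constant value as $0$ when $N=4$. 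As an alternative route one can read that value directly off the relation $\cos\theta_j=-\cos(\phi_{j+1}-\phi_{j-1})$ used in the proof of \cref{thm:bicentric-sum}, by pairing the $j$-th and $(j+2)$-th terms of the sum.

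I do not expect a genuine obstacle here — this is the most elementary statement in the paper, and the only point worth a sentence is a convexity caveat: the argument uses the \emph{convex} cyclic quadrilateral, whereas a self-intersected inscribed quadrilateral has equal, rather than supplementary, opposite angles. For a nested pair of circles the Poncelet $N=4$ family consists of convex quadrilaterals, so this case does not arise, and I would state this explicitly to keep the Observation unambiguous.
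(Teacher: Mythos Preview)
Your proposal is correct and matches the paper's own justification exactly: the paper simply remarks that a circle-inscribed quadrilateral has supplementary opposite angles, which is precisely the supplementary-angle argument you give. Your added convexity caveat and the inscribed-angle elaboration go beyond what the paper states, but the core approach is identical.
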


Since the second bicentric pedal is a degenerate polygon:

\begin{observation}
The sum of cosines of the second limiting pedal to the N=4 bicentric family is equal to 4.
\end{observation}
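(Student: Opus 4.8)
The plan is to reduce the claim to the collinearity of the four $\ell_2$-pedal vertices, which is exactly the content of the preceding proposition, together with a determination of the cyclic order in which the pedal polygon traverses them. First I would record the elementary fact that if $q_1,q_2,q_3,q_4$ are four distinct collinear points, then the internal angle $\theta_j$ of the degenerate quadrilateral $q_1q_2q_3q_4$ at $q_j$ — the angle between the rays $q_jq_{j-1}$ and $q_jq_{j+1}$ — equals $0$ if $q_{j-1}$ and $q_{j+1}$ lie on the same side of $q_j$ along the line, and $\pi$ otherwise. Hence $\cos\theta_j\in\{+1,-1\}$ and $\sum_j\cos\theta_j$ depends only on the cyclic order of the labels along the line. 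Enumerating the three inequivalent cyclic orders of four collinear points, one finds $\sum_j\cos\theta_j=4$ precisely when the linear order of the four points is $q_1,q_3,q_2,q_4$ (up to reversal) — the self-intersecting pattern — and $\sum_j\cos\theta_j=0$ for the two remaining (``uncrossed'') orders. So everything reduces to showing that the $\ell_2$-pedal is of the crossed type, equivalently that every $\theta_j$ vanishes.

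To pin down the order I would use the description of the $\ell_2$-pedal from \cref{sec:pedal-perimeter} and the appendix: the pedal vertices are the inversions, in a circle centered at $\ell_2=f_2'$, of the vertices $P_1',\dots,P_4'$ of the confocal hyperbolic billiard $4$-periodic, and — as invoked in the proof of the preceding proposition — the six points $P_1',P_2',P_3',P_4',f_1',f_2'$ are concyclic. Inversion centered at $f_2'$ carries that circle (punctured at $f_2'$) to a line, sending the cyclic order of the remaining five points to their linear order; thus the linear order of $q_1,q_2,q_3,q_4$ is read off from the cyclic arrangement of $P_1',P_2',P_3',P_4',f_1'$ around the circle relative to the basepoint $f_2'$. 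Since the $N=4$ bicentric family is connected and the pedal vertices remain distinct throughout (its perimeter $L_{-,N=4}=4a^2/(b^2c)$ is a fixed nonzero constant), this cyclic arrangement cannot change within the family, so it suffices to evaluate it at one convenient axis-symmetric configuration, where the interleaving of the billiard vertices with the two foci on their common circle is transparent; one checks that this yields the linear order $q_1,q_3,q_2,q_4$. Therefore $\theta_j=0$ for every $j$ and $\sum_j\cos\theta_j=4$.

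I expect the main obstacle to be the bookkeeping in this last step: rigorously establishing the concyclicity of the four hyperbolic billiard vertices with the two foci in the $N=4$ case, and then correctly propagating the cyclic order on that circle to a linear order under the focus-centered inversion, making the connectedness argument airtight so that a single special configuration settles the general one. As a fully computational alternative — consistent with the CAS-based style of the preceding propositions — one can instead substitute the explicit $N=4$ bicentric vertices and the limiting point $\ell_2$ from \cref{app:bicentric-vertices-n34} (with Kerawala's closure condition imposed), compute the four orthogonal projections $q_j$ of $\ell_2$ onto the sidelines, and verify with a CAS that, for all values of the free parameter, $q_{j-1}-q_j$ and $q_{j+1}-q_j$ are positive scalar multiples of one another at each $j$; this gives $\cos\theta_j\equiv 1$ and hence the sum $4$ directly.
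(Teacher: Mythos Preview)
Your proposal is correct and in fact more careful than the paper's own treatment. The paper records this as an \emph{Observation} with a one-line justification---``Since the second bicentric pedal is a degenerate polygon''---relying on the preceding Proposition that the $\ell_2$-pedal vertices are collinear, but without addressing the ordering issue you raise. You correctly note that collinearity alone only forces each $\cos\theta_j\in\{\pm 1\}$, so the sum could a priori be $0$ or $4$ depending on the linear order of the $q_j$; your extra step---reading that linear order off the cyclic arrangement of the hyperbolic-billiard vertices and foci on their common circle via the $f_2'$-centered inversion, then using connectedness of the family to reduce to a single symmetric configuration---genuinely fills this gap. The CAS alternative you sketch at the end is also entirely in the spirit of the paper's $N=3,4$ appendices. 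In short, both approaches reach the same conclusion, but yours supplies the ordering argument the paper leaves implicit.
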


\bibliographystyle{maa}
\bibliography{references,authors_rgk_v3}

\end{document}